\documentclass[11pt,leqno,twoside]{amsart}
\usepackage{amssymb,amsmath,amsthm,soul,color}
\usepackage{t1enc}
\usepackage[cp1250]{inputenc}
\usepackage{a4,indentfirst,latexsym,graphics,graphicx}
\usepackage{graphics}
\usepackage{mathrsfs}
\usepackage{cite,enumitem,graphicx}
\usepackage[colorlinks=true,urlcolor=blue,
citecolor=red,linkcolor=blue,linktocpage,pdfpagelabels,
bookmarksnumbered,bookmarksopen]{hyperref}
\usepackage[english]{babel}
\usepackage[left=2.61cm,right=2.61cm,top=2.72cm,bottom=2.72cm]{geometry}
\usepackage[metapost]{mfpic}
\usepackage[hyperpageref]{backref}
\usepackage[colorinlistoftodos]{todonotes}
\usepackage[normalem]{ulem}
\usepackage{pdfsync}



\makeatletter
\providecommand\@dotsep{5}
\def\listtodoname{List of Todos}
\def\listoftodos{\@starttoc{tdo}\listtodoname}
\makeatother

\numberwithin{equation}{section}
\newtheorem{theorem}{Theorem}[section]
\newtheorem{proposition}[theorem]{Proposition}
\newtheorem{lemma}[theorem]{Lemma}
\newtheorem{corollary}[theorem]{Corollary}
\newtheorem{definition}[theorem]{Definition}
\newtheorem{remark}[theorem]{Remark}
\newtheorem{Ex}[theorem]{Example}

\title[Compactness and nonradial solutions]{
 A group theoretic proof of a compactness lemma \\ and  existence of nonradial solutions for \\
semilinear elliptic equations}
\author[L. Biliotti]{Leonardo Biliotti}
\author[G. Siciliano]{Gaetano Siciliano}

\address[L. Biliotti]{\newline\indent
	Dipartimento di Scienze Matematiche, Fisiche e Informatiche,
	\newline\indent
         Universit\`a di Parma
	\newline\indent
	Parco Area delle Scienze 7/A,
        43124 Parma, Italy}
\email{\href{mailto:leonardo.biliotti@unipr.it}{leonardo.biliotti@unipr.it}}

\address[G. Siciliano]{\newline\indent
	Departamento de Matem\'atica - Instituto de Matem\'atica e Estat\'istica
	\newline\indent
	Universidade de S\~ao Paulo
	\newline\indent
	Rua do Mat\~ao 1010,  05508-090  S\~ao Paulo, Brazil}
\email{\href{mailto:sicilian@ime.usp.br}{sicilian@ime.usp.br}}

\thanks{The first author was partially supported by PRIN  2015
   ``Variet\`a reali e complesse: geometria, topologia e analisi armonica'' and GNSAGA INdAM.
   The second author is supported by Capes, CNPq n.304660/2018-3 and Fapesp n.2018/17264-4.}
\subjclass[2010]{
35J20,  
35J61,  
22E60. 	
}
\keywords{Compactness lemma, existence of nonradial solutions, symmetric spaces}

\begin{document}

\maketitle
\begin{abstract}
Symmetry plays a basic role in variational problems (settled e.g. in $\mathbb R^{n}$ or in a more general manifold), for example to deal with the lack
of compactness which naturally appear when the problem is invariant under the action of a noncompact group.
In $\mathbb R^n$,  a compactness result for invariant functions with respect to a subgroup $G$ of $\mathrm{O}(n)$ has been proved under the condition that the $G$ action on $\mathbb R^n$ is compatible,
see \cite{willem}.
As a first result we generalize this and show  here that the compactness is recovered for particular  subgroups of  the isometry group of a Riemannian manifold.
We investigate also  isometric action on Hadamard manifold $(M,g)$ proving that a large class of subgroups of $\mathrm{Iso}(M,g)$ is compatible. As an application we get a compactness result for
``invariant'' functions which allows us to prove the existence of nonradial solutions for a classical scalar equation and for a nonlocal fractional equation on $\mathbb R^n$ for $n=3$ and $n=5$,
improving some results known in the literature.
Finally, we prove the existence of nonradial invariant functions such that a compactness result holds for some symmetric spaces of non compact type.
\end{abstract}

\bigskip

\begin{center}
\begin{minipage}{12cm}
\tableofcontents
\end{minipage}
\end{center}

\medskip

\maketitle
%
\section{Introduction}
It is known that many interesting partial differential equations in $\mathbb R^{n}$ are invariant under
the orthogonal group $\textrm O(n)$
so that it makes sense to find solutions which respect this symmetry, i.e. are radial.
These solutions are physically interesting and indeed
in scalar field theory they are also called {\sl particle-like}.
Particularly interesting is the case when the equations are variational: i.e. a smooth functional (called {\sl the energy functional}) on
Banach or Hilbert space $X$ can be defined in such a way that its critical points give exactly the solutions
of the equations;  very often the restriction of this functional to the subspace of radial functions $X_{\textrm O(n)}$
is even ``natural'', in the sense of the {\sl Palais' Criticality Symmetric Principle} \cite{PA}:
one roughly speaking says that critical symmetric points are symmetric critical points.
The advantage of working in the subspace $X_{\textrm O(n)}$ is that its elements may have
additional properties which enable to recover a compactness condition (as required by many abstract theorems in Critical Point Theory) that the functional have to satisfy in order to guarantee the existence of critical symmetric points (see below for a specific problem). From a functional point of view, this compactness is a consequence of the
compact embedding of Sobolev spaces into Lebesgue spaces.

\medskip

A natural problem that arises is then the search of nonradial solutions for such equations,
and indeed the main difficulty is exactly to guarantee that the solutions found are effectively non radial.

This topic has attracted much attention
and the  existence of a nonradial solutions have been intensively investigated by many authors.

Particularly interesting for our purpose
 is the work by
Bartsch and Willem \cite{BW} where the authors consider the following
equation
\begin{equation}\label{eq:bW}
-\Delta u + b(|x|) u =f(|x|,u) \quad \text{ in } \mathbb R^{n}, \ n\geq3
\end{equation}
under suitable assumptions on $b$ and $f$ and look for
non radial solutions. The approach of the authors consists in
restricting the energy functional, let us say $\phi$, which is naturally defined in
the Sobolev space $H^{1}(\mathbb R^{n})$, to the subspace $H^{1}_{G}(\mathbb R^{n})$
of fixed points for a suitable group action $G$ which does not contains radial functions (except of course the null function).
Roughly speaking and without entering  in details here, we can say that  the group $G$ is generated by
\begin{itemize}
\item[(i)] functions which are ``radial in groups of variables'', that is, they are invariant for the action induced by
the subgroup of $\textrm O(n)$:
$$\textrm O(m)\times \textrm O(m) \times\textrm O(n-2m),$$
\item[(ii)]  and by functions which are invariant  by a ``suitable action'' induced by
$$\tau \cdot (x_{1}, x_{2}, x_{3}) = (x_{2}, x_{1}, x_{3}),$$ where $(x_{1}, x_{2}, x_{3}) \in \mathbb R^m \times \mathbb R^{m} \times
\mathbb R^{n-2m}.$
\end{itemize}
Note that $\tau^{2}=e$, the identity. The success of the method is based on the fact that,
as proved by  Lions  in \cite{L}, the set of functions in $H^{1}(\mathbb R^{n})$ satisfying (i) has compact embedding
into $L^{p}$ spaces and the
same holds for $H^{1}_{G}(\mathbb R^{n})$, that is, when also the action of $\tau$
is taken into account. As a consequence,
the energy functional restricted to $H^{1}_{G}(\mathbb R^{n})$ satisfies
the {\sl Palais-Smale condition}: any sequence $\{u_{k}\}\subset H^{1}_{G}(\mathbb R^{n})$
such that
$$\{\phi(u_{k})\} \ \text{is convergent and }  \phi'(u_{k})\to 0$$
has a convergent subsequence.
However, in order to consider (ii) and then guarantee that there are not nontrivial  radial function
in $H^{1}_{G}(\mathbb R^{n})$, the authors assume that $2\leq m\leq n/2$ and $2m\neq n-1$
which forces $n=4$ or $n\geq6$.

Another remarkable paper where nonradial solutions for an elliptic equation
are found by means of a similar strategy, is the one by d'Avenia \cite{P}.
Here  the author is interested in a so called {\sl Schr\"odinger-Poisson equation}
in $\mathbb R^{3}$ and  he restricts the energy functional to
the set of functions in $\mathbb R^{3}$ which are radial in the first two variables
and even in the third one, in order to guarantee that the solution found is not radial.
With our approach we find nonradial solutions in $\mathbb R^{3}$, (since they are
radial in the first two variables and periodic in the third one) and for $n\geq4$ the compact
embedding of our working space into $L^{p}$ permits to have nonradial solutions,
without any periodicity assumption.

%
%
%

\medskip

Looking at the group theoretic properties used in the previous papers
and motivated also by
\cite[Definition 1.22 p.16]{willem} where Willem gives  the definition
of {\sl compatible group} which permits to have the compact embedding
of ``invariant'' functions into $L^{p}$, we
try here to generalize and understand when the compact embedding
of Sobolev spaces of ``non radial'' functions into Lebesgue spaces holds.

Observe that in particular from \cite[Definition 1.22 p.16]{willem}
it follows that  compatible groups with $\mathbb R^{n}$ are
$G=\mathrm{O}(n)$ and $G=\mathrm{O}(N_1) \times \cdots \times \mathrm{O}(N_k)$, where $N_j \geq 2$, $j=1,\ldots,k$ and $\sum_{j=1}^k N_j=n$, and the compact embedding results of
Lions \cite{L} are recovered.

%
%
%


\medskip

\subsection{Main results: general statements}
Motivate by the cited papers,
we generalize here
the construction of the group action $G$ given in \cite{BW}
and investigate  isometric actions on $(\mathbb R^n,\langle \cdot , \cdot \rangle)$ and on its open $G$-invariant unbounded subsets
(being interested into compact embeddings, we will consider just unbounded subsets).
What we prove is that   a large class of subgroups of $\mathrm{Iso}(\mathbb R^n, \langle \cdot,\cdot \rangle )$  is compatible with $\mathbb R^{n}$ according to  Definition \ref{def-compatible}.
See Propositions \ref{toritto-discreto-generale}, \ref{toritto-errenne-discreto-generale} and \ref{toritto-sostanzioso}.
Here $\langle \cdot,\cdot\rangle$ denotes the canonical scalar product. Actually the more general case
of Riemannian manifold is treated.

As an application we prove the existence of nonradial solutions for the problem in \eqref{eq:bW}
for $n=3$ and $n=5$, thus extending the result of Bartsch and Willem in \cite{BW}.
To show the generality of the method and the range of applicability of our abstract results,
 we  show two more applications to systems of elliptic equations.
 The first one if  to the existence of nonradial solutions for a
 nonlocal fractional equation on $\mathbb R^n$ extending again to the case $n=3$ and $n=5$
a result in \cite{DSS} that was stated just for the cases $n=4$ and $n\geq6$.
The second one is the  existence  result of nonradial solutions in \cite{P}
 to the case of $\mathbb R^{n}, n\geq3$ (actually we show a multiplicity result).
However, since the statement of these theorems would imply many preliminary details and assumptions,
we prefer do not state them here but refer the reader directly to Theorems \ref{th:BW}, \ref{th:DSS} and \ref{th:P}
in Subsection \ref{subsec:appl}.

 \medskip

 After the euclidean case,  we investigate subgroups of the isometry group of an Hadamard manifold $(M,g)$. This means $(M,g)$ is a simply connected complete Riemannian manifold of nonpositive sectional curvature and so for any $p\in M$,  $\exp_p :T_p M \longrightarrow M$ is a diffeomorphism (see e.g. \cite{eberlein,helgason}). By Cartan Theorem (see e.g. \cite{helgason}) a compact group $G$ acting  isometrically on $(M,g)$  has a fixed point: call it $p$.  
 Then $\exp_p :T_p M \longrightarrow M$ becomes $G$-equivariant, where the $G$ action on $T_p M$ is the isotropy representation. Since by Rauch Theorem (see  \cite{docarmo}) the exponential map increases the distance, we prove that a large class of subgroups of $\mathrm{Iso}(M,g)$ are compatible (Proposition \ref{toritto-varieta}.)

 As an application  of this, we get a compactness result for invariants functions on an Hadamard manifold. We also point out that if a discrete and closed group $G$ acting isometrically on a Riemannian manifold $(M,g)$ is such that $M/G$ is compact then it is  compatible.
Finally we prove that the basic tool used in \cite{BW} in order to prove the existence of
a group action such that its fixed points have nonradial functions,  holds for some symmetric spaces of noncompact type. Roughly speaking, if $M=\mathrm{SL}(2n,\mathbb R)/\mathrm{SO}(2n)$, or $M$ is a the dual of the complex, real and quaternion projective space (see \cite{helgason}, then) we determine a compact subgroup $\widetilde H$ of the isometry group containing a closed subgroup of dimension bigger than one and of index two. This allow us to define an isometric action of $\widetilde H$ on $H^1 (M)$ such that the invariant functions are not radial unless $u=0$ and the embedding of the fixed points set with respect to $\widetilde H$ into $L^p (M)$ with $2<p<2^*$ is compact.

\medskip

The paper is organized as follows.

In Section \ref{sec:IA} we recall some facts on isometric actions on Riemannian manifolds
and give the definition of compatible group. The main result here is the general
Proposition \ref{toritto-discreto-generale}.

In Section \ref{sec:euclidean} we consider the special case of $\mathbb R^{n}$ where we give applications of our method to the existence of nonradial solutions for partial
differential equations. A fundamental tool in order to show the main results, Theorems \ref{th:BW}, \ref{th:DSS} and \ref{th:P}, is the technical Lemma \ref{toritto-tecnico}.

In Section \ref{sec:Hadamard} we consider the case of Hadamard manifold and in Section
\ref{sec:SSnoncompact} the case of symmetric spaces of noncompact type.

\medskip

\subsection*{Acknowledgments} The authors wish to thank Fabio Podest\`a and Jaroslaw Medersky
for interesting discussions and to point out Remark \ref{toritto-esempio}.

\section{Isometric actions on Riemannian manifolds}\label{sec:IA}
Let $(M,g)$ be a connected Riemannian manifold. We may introduce the distance on $M$ via the notion of length of curves, that we denote by $d$, and the topology of $(M,d)$ coincides with the manifold topology, see e.g. \cite{docarmo}. Let $\mathrm{Iso}(M,g)$ the group of isometries of $(M,g)$. It is well-known that any closed subgroup $G\subset \mathrm{Iso}(M,g)$ is a Lie group with respect to the compact open topology. In particular $\mathrm{Iso}(M,g)$ is a Lie group and if $M$ is compact then $\mathrm{Iso}(M,g)$ is compact as well, see \cite{kobayashi-transformation}. Moreover, the map
\[
G \times M \longrightarrow M, \qquad (f,p) \mapsto f(p),
\]
is differentiable and so defines a differential action on $M$ which is a proper action. This means that  the map
\[
G \times M \longrightarrow M \times M, \qquad (f,p) \mapsto (p,f(p)),
\]
is proper. By a well-known results it follows that for any $p\in M$, the isotropy subgroup
$$G_p=\{g\in G:\, gp=p\}\subset G$$ is compact, the orbit throughout $p$, i.e.,
$$G(p)=\{gp:\, g\in G\},$$ is a closed embedded submanifold of $M$ and the Slice Theorem and The Principal Orbit theorem hold.
For these facts we refer the reader e.g. to \cite{renatone-marcox}.

Denote  $\mathcal B_r(p)=\{q\in M: d(p,q)<r \}$, respectively $\mathcal S_r (p)=\{q\in M:\, d(p,q)=r\}$, the open ball of center $p$ and radius $r$, respectively the sphere of center $p$ and radius $r$.

A major role in what follows will be played by the next number.
\begin{definition}\label{def-compatible}
Let $G\subset \mathrm{Iso}(M,g)$ be a closed subgroup. For  $y\in M$ and  $r>0$, we define
\[
\mathrm{m}(y,r,G)=\mathrm{sup}\{n\in \mathbb N:\, \exists g_1,\ldots,g_n \in G: j\neq k \Rightarrow \mathcal B_r (g_j y) \cap \mathcal B_r (g_k y ) =\emptyset\}.
\]
Let $\Omega \subset M$ be an open $G$-invariant unbounded subset. We say that $\Omega$ is compatible with $G$
(or that $G$ is compatible with $\Omega$)
if there exists $r>0$  and $p\in M$ such that
\[
\lim_{\begin{array}{l} d(p,y) \to +\infty \\ d(\Omega,y)\leq r \end{array}}  m(y,r,G)=+\infty.
\]
\end{definition}
Let $q\in M$. Since $d(q,y) \mapsto +\infty$ if and only if $d(p,y) \to +\infty$, the above definition does not depend on $p\in M$.
Moreover, if $G\subset K$, then $\mathrm{m}(y,r,G)\leq \mathrm{m} (y,r,K)$. Hence if $\Omega$ is compatible with $G$ then it is compatible with $K$.
\begin{proposition}\label{toritto-discreto-generale}
Let $(M,g)$ be a connected noncompact Riemannian manifold and let $G\subset\mathrm{Iso}(M,g)$ be a closed and discrete subgroup with infinite elements.
Then there exists $r>0$ and a $G$-invariant open unbounded domain $\Omega$ of $(M,g)$ such that $\mathrm{m}(z,r,G)=+\infty$ for any $z\in \Omega$.
Moreover, if $M/G$ is compact, then there exists $r>0$ such that $\mathrm{m}(z,r,G)=+\infty$ for any $z\in M$ and so any open $G$-invariant unbounded domain of $M$ is compatible with $G$.
\end{proposition}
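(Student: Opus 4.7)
The idea is to exploit that an infinite, closed, discrete subgroup of isometries has orbits whose points are \emph{uniformly separated in a neighborhood of any base point}, so that the $G$-saturation of a small ball around a single point supplies an invariant unbounded domain on which $\mathrm{m}(\cdot,r,G)\equiv +\infty$ for an appropriate $r$.

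First, fix $p\in M$. The closed subgroup $G\subset\mathrm{Iso}(M,g)$ acts properly, and being discrete it has compact discrete isotropy $G_p$, hence $G_p$ is finite; consequently $G(p)\cong G/G_p$ is infinite. Set
\[
\delta_p:=\inf\{d(p,gp):g\in G,\ gp\neq p\}.
\]
Properness applied to the proper map $g\mapsto(p,gp)$ forces $\{g\in G:d(p,gp)\le 1\}$ to be compact, hence finite in the discrete group $G$, so $\delta_p>0$.

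Next, pick $r:=\delta_p/4$ and $\epsilon\in(0,r)$. For $y\in\mathcal B_\epsilon(p)$ and any $g_1,g_2\in G$ lying in distinct cosets of $G_p$ (so that $g_1p\neq g_2p$), the triangle inequality gives
\[
d(g_1y,g_2y)\ge d(g_1p,g_2p)-2\epsilon\ge\delta_p-2\epsilon>2r,
\]
so $\mathcal B_r(g_1y)\cap\mathcal B_r(g_2y)=\emptyset$. Since $G/G_p$ is infinite, selecting one representative per coset produces infinitely many pairwise disjoint balls, and therefore $\mathrm{m}(y,r,G)=+\infty$. Defining
\[
\Omega:=\bigcup_{g\in G}\mathcal B_\epsilon(gp),
\]
we obtain an open $G$-invariant set; it is unbounded because the orbit $G(p)\subset\Omega$ meets every metric ball $\overline{\mathcal B_R(p)}$ in only a finite set (properness again), and the $G$-equivariance relation $\mathrm{m}(hy,r,G)=\mathrm{m}(y,r,G)$, obtained by replacing each witness $g_i$ by $g_ih^{-1}$, propagates $\mathrm{m}(\cdot,r,G)=+\infty$ from $\mathcal B_\epsilon(p)$ to all of $\Omega$. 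This settles the first assertion.

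For the second assertion, perform the above construction at every point $q\in M$, producing a radius $r_q=\delta_q/4$ and a neighborhood $\mathcal B_{\epsilon_q}(q)$ on which $\mathrm{m}(\cdot,r_q,G)=+\infty$. The resulting $G$-invariant open cover of $M$ descends to an open cover of the compact quotient $M/G$; extract a finite subcover $\{\mathcal B_{\epsilon_{q_i}}(q_i)\}_{i=1}^N$ and put $r:=\min_i r_{q_i}$. Any $z\in M$ has a $G$-translate in some $\mathcal B_{\epsilon_{q_i}}(q_i)$, so $G$-invariance of $\mathrm{m}$ yields $\mathrm{m}(z,r,G)=+\infty$ for every $z\in M$; the compatibility condition of Definition~\ref{def-compatible} is then trivially verified for any open $G$-invariant unbounded $\Omega$.

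\textbf{Main obstacle.} The delicate point is the \emph{cluster} structure of $Gy$ when $G_p$ is nontrivial: near each orbit point $gp$ there lie up to $|G_p|$ points of $Gy$, so one cannot simply use every $g\in G$ to produce disjoint balls — one must choose one representative per coset of $G_p$. Once this is correctly accounted for, positivity of $\delta_p$ (from properness together with discreteness) supplies the inter-cluster gap that makes the triangle-inequality estimate work uniformly on a neighborhood of $p$.
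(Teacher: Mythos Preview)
Your argument is correct and parallels the paper's overall strategy—local separation of orbit points, extension to a neighborhood by the triangle inequality, $G$-saturation to produce $\Omega$, and a finite-cover argument in the cocompact case—but is more elementary in execution. Where the paper invokes the Slice Theorem and the Principal Orbit Theorem to first find a point with trivial isotropy and then to produce a ball $\mathcal B_r(p)$ on which $G(q)\cap\mathcal B_r(p)=G_p(q)$, you bypass this machinery entirely: you define $\delta_p=\inf\{d(p,gp):gp\neq p\}$, obtain $\delta_p>0$ directly from properness and discreteness, and handle nontrivial isotropy by working with coset representatives of $G_p$ from the outset. Your $\Omega=G\cdot\mathcal B_\epsilon(p)$ is also simpler than the paper's construction, which first covers a small geodesic sphere $\mathcal S_\alpha(p)$ by finitely many balls before saturating by $G$. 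The paper's route makes the slice and orbit-type structure explicit, which could be useful if one wanted finer information; your route is shorter and uses nothing beyond properness of isometric actions and the triangle inequality, while arriving at the same conclusion.
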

\begin{proof}
Let $p\in M$ be such that $G(p)$ is a principal orbit. We claim that $G_p=\{p\}$. Indeed, since the orbit throughout $p\in M$ is a discrete set, it follows that the slice representation coincides with the isotropy representation, i.e.,
\[
\rho:G_p \longrightarrow \mathrm{O}(T_p M), \qquad g \mapsto \mathrm d g_p.
\]
Since $G(p)$ is a principal orbit we have that $\rho$ is trivial and so if $g\in G_p$ then $g(p)=p$ and $\mathrm{d g}_p =\mathrm{Id}$. This implies that $g=\mathrm{Id}_M$, see \cite{docarmo}.

Applying the Slice Theorem there exists a $G$-invariant neighborhood $U$ of $p$ such that for any $q\in U$, $G_q=\{e\}$ and
\[
G(q) \cap U=G_p(q)=q.
\]
We may assume that $U=\mathcal B_r (p)=\{q\in M: d(p,q) <r\}$  for some $r>0$. We claim that for any $r'<r$ and for any $q\in \mathcal B_r (p)$, we have
\[
d(gq,hq) > r',
\]
whenever  $g\neq h$. Indeed, otherwise there exist $g,h\in G$ such that
\[
r> r'\geq d(gp,hp)=d(p,g^{-1}hp).
\]
Hence $g^{-1}h=\theta \in G_p=\{Id\}$ and so $h=g$. A contradiction.

Now, let any $p\in M$. Since $G_p$ is compact it follows that the cardinality of $G_p$ is finite. Applying the Slice Theorem, there exists $r>0$ such that for any $q\in \mathcal B_r (p)$ we have
\[
G(q) \cap \mathcal B_r (p)=G_p (q),
\]
and $G_q\subset G_p$. Therefore, if $gG_p \neq hG_p$, we have
\[
d(gq,hq)>r',
\]
for any $q\in \mathcal B_r (p)$ and for any $r'<r$. Indeed, as before, assume that there exists $g,h\in G$, such that $gG_p \neq hG_p$ and  $d(gq,hq)\leq r'$. Therefore $d(h^{-1}gq,q)\leq r' <r$ and so $h^{-1}g\in G_p$.  A contradiction.

Now, let $p\in M$. Given  $\epsilon >0$, we denote by $B_\epsilon =\{z\in T_p M:\, \parallel z \parallel <\epsilon\}$ and by $S_\epsilon =\{z\in T_p M:\, \parallel z \parallel =\epsilon\}$, where $\parallel \cdot \parallel=\sqrt{g(p)(\cdot,\cdot)}$, the ball of radius $r$, respectively the sphere of radius $r$, in $T_p M$.
Let $\epsilon >0$ such that $\exp_p:B_\epsilon \longrightarrow \mathcal B_\epsilon (p)$ is a diffeomorfism onto, see \cite{docarmo}. Let $\alpha<\epsilon$ and let $q\in S_\alpha (p)=\exp(S_\alpha)$. We have proved that there exists $r(q)>0$ such that for any $z\in \mathcal B_{r(q)} (q)$, we have
\[
d(gz,hz) >r(q),
\]
whenever  $gG_q \neq h G_q$. Since $\mathcal S_\alpha (p)$ is compact, there exists $q_1\,\ldots, q_m \in \mathcal S_\alpha (p)$ and $r_1(q_1), \ldots r_m (q) >0$ such that
\[
\mathcal S_\alpha (r) \subset \mathcal B_{r_1(q)} (q_1 ) \bigcup \cdots \bigcup \mathcal B_{r_m(q)} (q_m )
\]
and for any $z\in \mathcal B_{r_j(q)} (q_j )$ we have
\[
d(g z,h z) >r(q_j).
\]
whenever $g G_{q_j} \neq h G_{q_j}$, for $j=1,\ldots,m$. Applying the triangle inequality, we have $\mathcal B_{\frac{r}{2}} (gz) \cap \mathcal B_{\frac{r}{2}} (hz)=\emptyset $ for any $g,h\in G$ such that $gG_{g_j}\neq hG_{g_j}$ and so for infinite $g,h\in G$. Therefore $\mathrm{m}(z,r/2,G)=+\infty$. This holds for any $z\in \mathcal B_{r_1(q)} (q_1 ) \bigcup \cdots \bigcup \mathcal B_{r_m(q)} (q_m )$.

Let $\Omega=G \left(\mathcal B_{r_1(q)} (q_1 ) \bigcup \cdots \bigcup \mathcal B_{r_m(q)} (q_m ) \right)$. Then
$\Omega$ is a $G$-invariant  unbounded domain of $(M,g)$, i.e., $d:\Omega \times \Omega \longrightarrow \mathbb R$ is not bounded above and, finally, for any $z\in \Omega$, we have
$\mathrm{m}(z,r/2,G)=+\infty$.

For the second part,
assume that $M/G$ is compact. Let $p\in M$. We have proved that there exists $r(p)>0$ such that for every $q\in \mathcal B_r (p)$, we have
\[
G(q) \cap \mathcal B_{r(p)} (p)=G_p (q),
\]
and $d(gq,hq) > r$ whenever $gG_p \neq hG_q$. Therefore $\Omega_p =G \mathcal B_{r(p)} (p)$ is an open $G$-invariant unbounded domain of $M$ such that for any $z\in \Omega$, we have
\[
d(gz,hz) > r,
\]
for infinite $g,h\in G$. Since
\[
M/G=\bigcup_{p\in M} \Omega_p /G,
\]
and $M/G$ is compact, there exits $p_1,\ldots,p_m \in M$ such that
\[
M=\Omega_{p_1} \cup \cdots \cup \Omega_{p_m}.
\]
Pick $r=\frac{1}{2}\mathrm{min}\{r (p_1),\ldots,r(p_m) \}$. Then for every $z\in M$, we have
\[
\mathrm{m}(z,r,G)=+\infty
\]
which concludes the proof.
\end{proof}
\begin{remark}
If $(M,g)$ is compact then $\mathrm{Iso}(M,g)$ is compact and so any closed and discrete subgroup $G\subset\mathrm{Iso}( M,g)$ is finite. This means that the condition $(M,g)$ is noncompact it is not a technical condition.
\end{remark}

\section{The case of $\mathbb R^{n}$ and application to PDEs}\label{sec:euclidean}
Now, we investigate isometric action on $\mathbb R^n$ endowed by the canonical scalar product $\langle \cdot,\cdot \rangle$. In the sequel we continue to denote by $d(x,y)=\parallel x-y\parallel=\sqrt{\langle x-y,x-y\rangle}$, i.e.,  the distance induced by $\langle \cdot,\cdot \rangle$ and by $B_r (p)=\{z\in \mathbb R^n:\, \parallel z-p \parallel <r\}$ the euclidian open ball.

Given $\Omega\subset \mathbb R^{n}$ let $H^{1}(\Omega)$ be the usual Sobolev space
and  $H^{1}_{0}(\Omega)$ be the closure of $C_{0}^{\infty}(\Omega)$ in $H^{1}(\Omega)$.

\begin{proposition}\label{toritto-errenne-discreto-generale}
Let $G\subset \mathbb R^n$ be a discrete non finite subgroup of $\mathrm{Iso}(\mathbb R^n,\langle \cdot,\cdot \rangle)=\mathrm{O}(n) \ltimes \mathbb R^n$ contained in $\mathbb R^n$. Then there exists $r>0$  such that $\mathrm{m}(z,r,G)=+\infty$ for any $z\in \mathbb R^n$.
\end{proposition}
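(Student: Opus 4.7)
The plan is to unpack the hypothesis and then exploit the particularly rigid structure of a discrete translation group. The phrase ``contained in $\mathbb{R}^n$'' places $G$ inside the translation factor of the semidirect product $\mathrm{O}(n)\ltimes \mathbb{R}^n$, so every element of $G$ acts as a pure translation $z\mapsto z+v$ for some $v\in G$. In particular, for any $z\in\mathbb{R}^n$ and any $g,h\in G$ corresponding to translations by vectors $v_g,v_h$, one has
\[
d(gz,hz)=\|v_g-v_h\|,
\]
and, crucially, this distance does not depend on the base point $z$. Thus the configuration of the orbit $Gz$ is the same (up to translation) for every $z$.

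Next I would invoke the classical structure theorem for discrete subgroups of $(\mathbb{R}^n,+)$: such a $G$, being discrete and infinite (hence of rank at least one), is automatically closed and isomorphic to a lattice $\mathbb{Z}^{k}$ with $k\ge 1$. This ensures that
\[
\delta:=\inf\{\|v\|\colon v\in G,\ v\neq 0\}>0,
\]
so that any two distinct elements of $G$ are at Euclidean distance at least $\delta$. This is the only ``analytic'' ingredient, and it is really a basic fact about discrete subgroups of Euclidean space.

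To conclude, I would fix any $r$ with $0<r<\delta/2$ (for instance $r=\delta/3$). For any $z\in \mathbb{R}^n$ and distinct $g,h\in G$, the centres of the balls $\mathcal{B}_r(gz)$ and $\mathcal{B}_r(hz)$ are separated by at least $\delta>2r$, so these balls are disjoint. Since $G$ is infinite, one may choose arbitrarily large finite subsets $g_1,\dots,g_N\in G$ and the corresponding balls $\mathcal{B}_r(g_1 z),\dots,\mathcal{B}_r(g_N z)$ are pairwise disjoint; letting $N\to \infty$ yields $\mathrm{m}(z,r,G)=+\infty$ for every $z\in\mathbb{R}^n$, as required.

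There is essentially no obstacle here; the point of the statement is precisely that the Euclidean translation setting is much cleaner than the Riemannian framework of Proposition~\ref{toritto-discreto-generale}: the isotropy is automatically trivial, the orbit separation is uniform in the base point, and one does not need to distinguish a principal-orbit point, invoke the Slice Theorem, or restrict to a $G$-invariant unbounded domain $\Omega\subsetneq M$. The only mildly subtle observation is that one need not assume $G$ is a \emph{full} lattice in $\mathbb{R}^n$ (i.e.\ $k=n$): the argument above is indifferent to the rank, which is why the conclusion holds on all of $\mathbb{R}^n$ regardless of whether $\mathbb{R}^n/G$ is compact.
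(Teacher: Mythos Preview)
Your proof is correct and follows essentially the same route as the paper: both arguments observe that distinct translations in $G$ are separated by some $\delta>0$ (the paper writes $\alpha$) by discreteness, note that $d(gz,hz)=\|v_g-v_h\|\ge\delta$ is independent of $z$, and then take $r<\delta/2$ to make all orbit balls pairwise disjoint. Your appeal to the structure theorem for discrete subgroups of $\mathbb{R}^n$ is harmless but unnecessary---the positivity of $\delta$ follows immediately from $0$ being an isolated point of $G$, which is exactly how the paper proceeds.
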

\begin{proof}
Since $G\subset \mathbb R^n$ is discrete, there exists $\alpha>0$ such that $G \cap B_\alpha (0)=\{0\}$.  Let $g,h\in G$. Then $gz=z+t(g)$ and $hz=z+t(h)$, respectively, and so
\[
d(gz,hz)=\parallel t(g)-t(h) \parallel \geq \alpha.
\]
Pick $r=\frac{1}{4}\alpha$. Then for every $z\in \mathbb R^n$, and for every $g,h\in G$, we have
\[
B_r (gz) \cap B_r (hz)=\emptyset,
\]
whenever $g\neq h$ concluding the proof.
\end{proof}
Now, assume that $G$ is a connected and  closed subgroup of $\mathrm{Iso}(\mathbb R^n, \langle \cdot, \cdot \rangle)$ of dimension bigger than one. Let $(\mathbb{R}^n)^G=\{p\in \mathbb R^n: G(p)=p\}$ be the fixed points set. It is well-known that $(\mathbb{R}^n)^G$ 
is totally geodesic, see \cite{renatone-marcox}. Hence if it was not empty then it would be a closed subspace. In the sequel we assume that $(\mathbb{R}^n)^G$ is empty or is reduced to $\{0\}$. In particular for any nonzero vector $y\in \mathbb R^n$, the orbit $G(y)$ has dimension at least one

Let $y \in \mathbb{R}^n\setminus\{0\}$ and let $\lambda \in \mathbb R$. Let $g\in G$ be such that $g\notin G_y$. Then $g(q)=Aq+v$ for some $A\in \mathrm{O}(n)$ and some $v\in \mathbb R^n$. Assume firstly that $Ay\neq y$. Then
\[
\lim_{\lambda \mapsto +\infty} d(\lambda y, \lambda Ay+v)^2=\lambda^2 \parallel Ay-y \parallel^2 -2 \lambda \langle Ay-y,v \rangle +  \parallel v \parallel^2 =\infty.
\]
Let $r>0$ and let $n\in \mathbb N$. Let $\lambda_o >0$ be such that for any $\lambda >\lambda_o$ we have $d(\lambda y, A \lambda y+v)^2 > (2nr)^2$.
Since $G(\lambda y)$ is a closed submanifold, it follows $(G(\lambda y),\langle \cdot, \cdot \rangle)$ is a complete Riemannian manifold. By a Theorem of Hopf-Rinow \cite{docarmo} there exists at least one minimizing geodesic $\gamma:[0,l] \longrightarrow G(\lambda y)$ parameterized with the arc length and satisfying $\gamma(0)=\lambda y$, $\gamma(l)=g(\lambda y)$ and  $L(\gamma)=l=d^{G(\lambda y)} (g (\lambda y),\lambda y) \geq d(\lambda y, g(\lambda y))> 2nr$. Let
\[
f:[0,l] \longrightarrow [0,d(\lambda y,g(\lambda y))], \qquad f(t)=d(\lambda y, \gamma(t)).
\]
Then $f$ is continuous and surjective since the image is connected and it contains $0$ and $d(\lambda y, g(\lambda y))$. Pick
\[
t_1=\mathrm{sup}\{t\in [0,l]:\, d(\lambda y, \gamma(t))=2r\}.
\]
Then $0<t_1<d(\lambda y, g(\lambda y))$ and for any $t\geq t_1$, we have $f(t)=d(\lambda y,\gamma(t)) > 2r$. Indeed, assume that there exists $t'>t_1$ such that $f(t')<2r$. Since $f(l)=d(\lambda y , g(\lambda y))>2r$, it follows that there exists $t''>t'>t_1$ such that $f(t'')=2r$ which is a contradiction. Moreover,  since
\[
d(\lambda y, g (\lambda y)) \leq d(\lambda y, \gamma (t_1 ) ) + d(\gamma(t_1), g(\lambda y) ),
\]
it follows
\[
d(\gamma(t_1), g(\lambda y)) \geq d(\lambda y , g (\lambda y ) )- 2r \geq 2(n-1)r.
\]
Therefore, we are able to iterate this procedure at least $n-1$ times, proving that there exist $0=t_0<t_1 < \ldots <t_{n-1}$ such that
\[
d(\gamma (t_i), \gamma(t_j)) \geq 2r,
\]
whenever $i \neq j$. This implies, again applying the triangle inequality, $B_{r}(\gamma(t_i)) \cap B_{r}(\gamma(t_j))=\emptyset$ for any $i\neq j$. Since $\gamma$ is a minimizing geodesic,  denoting  $\gamma(t_i)=g_i y$ for $i=0,\ldots,n-1$, it follows $\gamma(t_i)=g_i \lambda y \neq \gamma(t_j)= g_j \lambda y$ whenever $i\neq j$ and so  $g_i \neq g_j$ whenever $i \neq j$. This proves
\[
\mathrm{m}(\lambda y,r,G)\geq n.
\]
Since it holds for any $n \in \mathbb N$, we get
\[
\lim_{\parallel y \parallel +\infty} \mathrm{m}(y,r,G)=+\infty.
\]
If $Ay=y$, then for any $\lambda \in \mathbb R$, we have
\[
d(\lambda y, g^m (\lambda y))^2=m \parallel v \parallel^2,
\]
and so
\[
\lim_{m \mapsto +\infty} d(\lambda y, g^m (\lambda y))^2=+\infty,
\]
for any $\lambda \in \mathbb R$. Hence the above idea works as well and so
\[
\lim_{\parallel y \parallel +\infty} \mathrm{m}(y,r,G)=+\infty.
\]
Summing up, keeping in mind that $m(y,r,K)\leq m(y,r,G)$ whenever $K\subset G$, we have proved the following result.
\begin{proposition}\label{toritto-sostanzioso}
Let $G\subset \mathrm{Iso}(\mathbb R^n,\langle \cdot , \cdot \rangle)$ be a closed subgroup. Let $G^o$ be the connected component containing $e$. Assume that $G^o$ has dimension bigger than one and $(\mathbb R^n)^{G^o}=\{0\}$ or is empty. Then for any $r>0$ we have
\[
\lim_{\parallel y \parallel \mapsto +\infty} \mathrm{m}(y,r,G)=+\infty.
\]
Therefore any open $G$-invariant unbounded subset of $\mathbb R^n$ is compatible with   $G$.
\end{proposition}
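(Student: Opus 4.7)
The plan is to exploit the hypothesis on $G^o$: since $G^o$ is a connected positive-dimensional closed subgroup of $\mathrm{Iso}(\mathbb R^n,\langle\cdot,\cdot\rangle)$ with fixed point set contained in $\{0\}$, every orbit $G^o(y)$ with $y\neq 0$ is a positive-dimensional closed embedded submanifold, hence carries a complete induced Riemannian metric. Because $\mathrm{m}(y,r,G^o)\leq \mathrm{m}(y,r,G)$, it suffices to prove the limit statement for $G^o$, so I may assume $G$ is connected of dimension $\geq 1$ and $(\mathbb R^n)^G\subseteq\{0\}$.

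Given $r>0$ and $N\in\mathbb N$, the goal is to find $R>0$ such that $\mathrm{m}(y,r,G)\geq N$ whenever $\|y\|>R$. The first step is a distance blow-up along the orbit: for nonzero $y$, pick $g\in G\setminus G_y$, write $g(q)=Aq+v$, and split into the two cases, namely $Ay\neq y$ (in which case $d(\lambda y,g(\lambda y))^2$ is a quadratic polynomial in $\lambda$ with positive leading coefficient $\|Ay-y\|^2$) and $Ay=y$, which forces $v\neq 0$ (and then one uses iterates $g^m$ to make $d(y,g^m y)$ grow). In either situation, once $\|y\|$ is sufficiently large one produces an element $h=h(y)\in G$ with $d(y,h(y))>2Nr$.

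The second step converts this extrinsic separation into many disjoint Euclidean balls through a geodesic argument on the orbit. By Hopf--Rinow applied to the complete Riemannian manifold $G(y)$ with the induced metric, there is a unit-speed minimizing geodesic $\gamma\colon[0,\ell]\to G(y)$ from $y$ to $h(y)$ of length $\ell\geq d(y,h(y))>2Nr$, since intrinsic distance dominates the extrinsic one. I then run along $\gamma$ greedily, defining $t_0=0$ and $t_i=\sup\{t\in[t_{i-1},\ell]\colon d(\gamma(t_{i-1}),\gamma(t))=2r\}$ for $i=1,\dots,N-1$. Each $t_i$ is well-defined because the remaining geodesic length still exceeds $2r$, and an application of the triangle inequality combined with the defining supremum forces $d(\gamma(t_i),\gamma(t_j))\geq 2r$ for all distinct $i,j$. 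Writing $\gamma(t_i)=g_i y$ with distinct $g_i\in G$, the Euclidean balls $B_r(g_i y)$ are pairwise disjoint, so $\mathrm{m}(y,r,G)\geq N$.

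The main technical obstacle is uniformity: the estimate must hold as $\|y\|\to+\infty$ regardless of direction, whereas the distance blow-up is most transparent along a single ray $\lambda y$. To close this gap I would use compactness of the unit sphere: near each direction $u\in S^{n-1}$ there exists $g_u\in G$ with $g_u u\neq u$, and by continuity the quantitative estimate on $d(y,g_u y)$ persists on an open neighborhood of the ray $\mathbb R_{+}u$; a finite subcover then provides a uniform threshold $R$. Equivalently, one argues by contradiction: if $\mathrm{m}(y_k,r,G)$ were bounded along a sequence with $\|y_k\|\to\infty$, extract a limit direction $y_\infty$ of $y_k/\|y_k\|$, apply the construction for $y_\infty$, and transfer the separation back to $y_k$ for large $k$. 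Once the limit is established, the final sentence of the statement is immediate: any $G$-invariant unbounded open $\Omega$ is compatible with $G$ in the sense of Definition \ref{def-compatible} by choosing $p=0$.
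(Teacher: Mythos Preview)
Your argument is essentially identical to the paper's: the same reduction to $G^o$, the same case split on whether the linear part $A$ of $g\notin G_y$ fixes $y$ (quadratic growth in $\lambda$ versus iterates $g^m$), the same appeal to Hopf--Rinow on the complete orbit $G(y)$, and the same greedy selection of $2r$-separated points along the minimizing geodesic. You are in fact more scrupulous than the paper about the uniformity in the direction $y/\|y\|$, which the paper leaves implicit; your compactness-of-the-sphere and contradiction sketches are the natural ways to close that point.
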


\begin{remark}
If $G$ is connected and $(\mathbb R^n)^{G}$ is a closed non trivial subspace, then the $G$ action on $\mathbb R^n$ is not compatible. Indeed, if $v\in  (\mathbb R^n)^{G}$ is non zero, then
 $\mathrm{m}(\lambda v,r,G)=1$ for any $r>0$ and for any nonzero $\lambda \in \mathbb R$ and so $G$ is not compatible.
\end{remark}
\begin{corollary}\label{prodotto}
Let $G_1$ be a closed subgroup of $\mathrm{Iso}(\mathbb R^n,\langle \cdot , \cdot \rangle)$ and let $G_2$ be a closed subgroup of $\mathrm{Iso}(\mathbb R^m,\langle \cdot , \cdot \rangle)$. Assume that both $G_1$ and $G_2$ satisfy the conditions in Proposition \ref{toritto-sostanzioso} or in Proposition \ref{toritto-errenne-discreto-generale}. Then any open $G_1 \times G_2$-invariant unbounded subset of $\mathbb R^{n+m}$  is compatible with respect to $G_1 \times G_2$.
\end{corollary}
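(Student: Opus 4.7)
The plan is to exhibit an $r > 0$ such that $\mathrm{m}(y, r, G_1 \times G_2) \to +\infty$ as $\|y\| \to +\infty$ in $\mathbb R^{n+m}$, with no reference to the particular $\Omega$. This unconditional divergence is strictly stronger than the condition of Definition \ref{def-compatible}, so it will immediately yield the compatibility of every open $G_1 \times G_2$-invariant unbounded subset.

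The cornerstone is an elementary projection inequality. First I would observe that if $g_2^{(1)}, \ldots, g_2^{(N)} \in G_2$ produce pairwise disjoint balls $B_r(g_2^{(j)} y_2)$ in $\mathbb R^m$, then the group elements $(e, g_2^{(j)}) \in G_1 \times G_2$ produce pairwise disjoint balls $B_r\bigl((y_1, g_2^{(j)} y_2)\bigr)$ in $\mathbb R^{n+m}$: any common point, projected to $\mathbb R^m$, would lie in two disjoint $m$-dimensional balls. Symmetric reasoning with $(g_1^{(j)}, e)$ then yields
\[
\mathrm{m}(y, r, G_1 \times G_2) \;\geq\; \max\bigl\{\mathrm{m}(y_1, r, G_1),\; \mathrm{m}(y_2, r, G_2)\bigr\}, \qquad y = (y_1, y_2) \in \mathbb R^n \times \mathbb R^m.
\]

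With this bound in hand I would distinguish cases according to which of Propositions \ref{toritto-sostanzioso} and \ref{toritto-errenne-discreto-generale} is satisfied by each factor. If either $G_i$ falls under Proposition \ref{toritto-errenne-discreto-generale}, that proposition supplies $r_i > 0$ with $\mathrm{m}(z, r_i, G_i) = +\infty$ for every $z$, so the projection inequality forces $\mathrm{m}(y, r_i, G_1 \times G_2) = +\infty$ for every $y \in \mathbb R^{n+m}$, and the claim is immediate. If instead both $G_1$ and $G_2$ satisfy Proposition \ref{toritto-sostanzioso}, then for any fixed $r > 0$ and any $N \in \mathbb N$ I can choose $R > 0$ so that $\mathrm{m}(y_i, r, G_i) \geq N$ whenever $\|y_i\| \geq R$; since $\|y\|^2 = \|y_1\|^2 + \|y_2\|^2$ implies $\max(\|y_1\|, \|y_2\|) \geq \|y\|/\sqrt{2}$, once $\|y\| \geq \sqrt{2}\, R$ the projection inequality yields $\mathrm{m}(y, r, G_1 \times G_2) \geq N$, which is exactly the required divergence. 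The only delicate point in the whole argument is the projection inequality itself, which is essentially a one-line verification, so I do not foresee any serious obstacle.
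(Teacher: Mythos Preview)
Your argument is correct and is exactly the natural way to supply the details the paper omits; in the paper the corollary is stated without proof, as an immediate consequence of Propositions \ref{toritto-sostanzioso} and \ref{toritto-errenne-discreto-generale}, and your projection inequality together with the case split is precisely what makes that implication rigorous.
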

We state now the main result in the case of the Euclidean space.

Let $G\subset\mathrm{Iso}(\mathbb R^n,\langle \cdot,\cdot \rangle)$ and let $\Omega \subset \mathbb R^n$ any $G$-invariant open and unbounded subset of $\mathbb R^n$.
Let $H^{1}(\Omega)$ and $H^{1}_{0}(\Omega)$ be the usual Sobolev spaces.

The action of $G$ on $H^1_0(\Omega)$ is defined by
\[
gu(x):=u(g^{-1} x), \quad g\in G
\]
and the subspace of fixed point for this action is
\[
H^1_{0,G}(\Omega):=\left\{u\in H^1_0(\Omega): gu=u, \ \forall g\in G\right\}.
\]
\begin{theorem}\label{toritto-compatto-euclideo}
Let $G\subset \mathrm{Iso}(\mathbb R^n, \langle \cdot, \cdot \rangle)$ be a closed subgroup. Let $\Omega$ be
an unbounded open $G$-invariant subset of $\mathbb R^n$. Assume that one of the following conditions hold:
\begin{enumerate}
\item $G^o$, has dimension bigger than one and $(\mathbb R^n)^{G^o}=\{0\}$ or is empty;
\item $G$ is a discrete non finite closed subgroup of the translation group.
\item $G=G_1 \times G_2\subset \mathrm{Iso}(\mathbb R^{n+m}, \langle \cdot, \cdot \rangle)$ where,
$G_1 \subset \mathrm{Iso}(\mathbb R^{n}, \langle \cdot, \cdot \rangle)$, $G_2 \subset \mathrm{Iso}(\mathbb R^{m}, \langle \cdot, \cdot \rangle)$ and $G_1$ and $G_2$ satisfy condition $1$ or $2$.
\end{enumerate}
Then, the following embedding
\[
H^1_{0,G}(\Omega) \hookrightarrow L^p (\Omega),\ 2 < p < 2^*,
\]
is compact.
\end{theorem}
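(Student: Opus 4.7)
The plan is to reduce all three hypotheses to the single conclusion that $G$ is compatible with $\Omega$ in the sense of Definition \ref{def-compatible}, and then to derive the compact embedding via the classical Lions/Willem vanishing-lemma scheme. The compatibility in case (1) is exactly Proposition \ref{toritto-sostanzioso}; case (2) follows from Proposition \ref{toritto-errenne-discreto-generale}, which in fact gives the stronger conclusion $\mathrm{m}(z,r,G)=+\infty$ for every $z\in\mathbb R^n$; and case (3) is Corollary \ref{prodotto}. Thus in every setting there exist $r>0$ and a reference point $p_0\in\mathbb R^n$ such that $\mathrm{m}(y,r,G)\to +\infty$ along $y$ with $\|y-p_0\|\to+\infty$ and $d(\Omega,y)\le r$.

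The central lemma I would establish is an orbit-disjointness estimate. For $u\in H^1_{0,G}(\Omega)$, extend $u$ by zero to $\mathbb R^n$; the extension is again $G$-invariant because $\Omega$ is $G$-invariant, and belongs to $H^1(\mathbb R^n)$. If $g_1,\ldots,g_N\in G$ realize $N=\mathrm{m}(y,r,G)$, the balls $B_r(g_j y)$ are pairwise disjoint, and the change of variables $x=g_j w$ combined with $g_j$ being a Euclidean isometry and $u(g_j w)=u(w)$ gives, for every $q\in[2,2^*]$,
\[
\int_{B_r(g_j y)}|u|^q\,dx=\int_{B_r(y)}|u|^q\,dw.
\]
Summing over $j=1,\ldots,N$ and invoking the Sobolev embedding yields
\[
\mathrm{m}(y,r,G)\int_{B_r(y)}|u|^q\,dx\;\le\;\int_{\mathbb R^n}|u|^q\,dx\;\le\;C\,\|u\|_{H^1}^{q}.
\]

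To conclude, I would take a bounded sequence $(u_k)\subset H^1_{0,G}(\Omega)$, extract a weakly convergent subsequence $u_k\rightharpoonup u$, and note that the limit is automatically $G$-invariant since the $G$-action is unitary, hence weakly continuous, on $H^1$. Setting $v_k:=u_k-u$, we have $v_k\rightharpoonup 0$ with $\sup_k\|v_k\|_{H^1}<\infty$, and the key claim to establish is the Lions-type vanishing
\[
\lim_{k\to\infty}\sup_{y\in\mathbb R^n}\int_{B_r(y)}|v_k|^2\,dx=0.
\]
I would prove this by splitting the supremum into three regions of $y$: (i) if $d(\Omega,y)>r$ then $B_r(y)\cap\Omega=\emptyset$ and the integral is zero by the zero extension; (ii) for $\|y\|\le R$ the Rellich--Kondrachov theorem on the bounded set $\Omega\cap B_{R+r}(0)$ gives strong $L^2$-convergence of $v_k$ there, so $\sup_{\|y\|\le R}\int_{B_r(y)}|v_k|^2$ is eventually as small as desired; (iii) for $\|y\|>R$ with $d(\Omega,y)\le r$, compatibility forces $\mathrm{m}(y,r,G)$ to be arbitrarily large for $R$ large, and the orbit estimate with $q=2$ controls the integral uniformly in $k$ by $C\|v_k\|_{H^1}^2/\mathrm{m}(y,r,G)$. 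Once the vanishing is proved, a direct application of the Lions-type lemma \cite[Lemma~1.21]{willem} upgrades it to strong $L^p$-convergence $v_k\to 0$ for $2<p<2^*$, which is the asserted compact embedding.

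The main obstacle, as I see it, is the uniform-in-$y$ region split above: reconciling the tail estimate (iii), the Rellich estimate (ii), and the empty-intersection case (i) with a single small parameter, while handling the subtle possibility that $y\notin\Omega$ yet $B_r(y)\cap\Omega\neq\emptyset$. The compatibility notion in Definition \ref{def-compatible} is designed precisely to make the third region manageable, so the real work sits in matching $R$ and $k$ in the right order; the weak $G$-invariance of the limit is immediate from the isometric nature of the $G$-action on $H^1$.
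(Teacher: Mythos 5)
Your proposal is correct and follows essentially the same route as the paper: the paper's proof consists precisely of invoking Propositions \ref{toritto-sostanzioso}, \ref{toritto-errenne-discreto-generale} and Corollary \ref{prodotto} to get compatibility in each of the three cases, and then deferring to the argument of \cite[p.~16--17]{willem}, which is exactly the orbit-disjointness estimate, the three-region vanishing argument, and the Lions lemma that you spell out. The only difference is that you make explicit the details the paper leaves to the reference.
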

\begin{proof}
Applying Proposition \ref{toritto-sostanzioso}, Proposition \ref{toritto-errenne-discreto-generale} and Corollary \ref{prodotto}
the proof follows like in  \cite[p.16-17]{willem}.
\end{proof}

\begin{remark}\label{toritto-esempio}
If the condition $(\mathbb R^n)^{G^o}=\{0\}$ or empty is not satisfied then $H^1_{0,G}(\Omega) \hookrightarrow L^p (\Omega),\ 2 < p < 2^*,$ is not compact in general.
Indeed consider in $H^{1}(\mathbb R^{3})$ the action of $G=O(2)\times \mathbb Z_{2}$. The fixed points
for this action are the functions which are radial in the first two variables and odd in the third one,
and $(\mathbb R^n)^{G^o} = \mathbb R$.
However they do not have compact imbedding into $L^{p}(\mathbb R^{3}), 2<p<6$ due to the
invariance by translations in the third variable. A counterexample can be constructed in this way.
Let $\phi,\psi\in C_{0}^{\infty}(\mathbb R)\setminus\{0\}$
with $\psi$ odd and $\textrm{supp } \psi=[-2,-1]\cup[1,2]$.
Define for any $n\geq1$:
\begin{eqnarray*}
u_{n}(x_{1}, x_{2}, x_{3}) =
\begin{cases}
\phi(x_{1}^{2} + x_{2}^{2}) \psi (x_{3} - n) &\mbox{ \ \ if \ \ } x_{3}\geq n, \\
0 &\mbox{ \ \ if \ \  } x_{3}\in[0,n],
\end{cases}
\end{eqnarray*}
and let $\widetilde u_{n}$ be its oddness extension on $x_{3}\leq0$.
Observe that $\widetilde u_{n}\rightharpoonup 0$ in $H^{1}(\mathbb R^{3})$
but
$$\int_{\mathbb R^{3}} |\widetilde u_{n}|^{p}dx =\int_{\mathbb R^{3}}|\phi(x_{1}^{2} + x_{2}^{2})|^{p} |\psi (x_{3} )|^{p}dx=:c>0$$
(with $c$  independent on $n$) and so  $\{\widetilde u_{n}\}$
does not converge to $0$ strongly  into $L^{p}(\mathbb R^{3})$.
\end{remark}
Our aim is to apply the previous results to the existence of nonradial solutions
for some partial differential equations in $\mathbb R^{n}$.
Since these equations are invariant for $\mathrm O(n)$ the main difficulty is exactly
to exclude that the solutions found are radial.
In order to do that  we need to consider the action of a suitable group
in such a way that the subspace of the fixed point for this action
has not radial functions (except of course the zero function).
The next abstract result will be fundamental in order to construct this group.

%

Let $G$ be a Lie group and let $H\subset G$ be a closed subgroup and let $\mathrm{N}(H)=\{g\in G:\, gHg^{-1}=H\}$.
\begin{lemma}\label{toritto-tecnico}
Assume that there exists $\tau\in \mathrm N(H)$ such that $\tau \notin H$ and $\tau^2=e$. Then the subgroup $\widetilde H$ generated by $H$ and $\tau$ is closed, and so it is a Lie group, and $H$ is a normal subgroup of index two. In particular there exists a surjective homomorphism
\[
\rho: \widetilde H \longrightarrow \mathbb Z_2,
\]
such that $\rho(H)=1$ and $\rho(\tau)=-1$.
\end{lemma}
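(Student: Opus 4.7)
The plan is to exploit the normalizer hypothesis to put $\widetilde H$ into the normal form $\widetilde H = H \cup H\tau$, and then derive closedness, normality, and the homomorphism essentially for free.

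First I would use that $\tau\in \mathrm N(H)$ means $\tau H \tau^{-1}=H$; combined with $\tau^2=e$ (hence $\tau^{-1}=\tau$) this gives $\tau H=H\tau$. So conjugation by $\tau$ is an automorphism of $H$, and we can ``push $\tau$ to the right'' past any element of $H$. An arbitrary word in $H\cup\{\tau\}$ therefore reduces to a product of the form $h_1\tau h_2 \tau\cdots h_k \tau^{\epsilon}$ which, after repeatedly using $\tau h=(\tau h\tau^{-1})\tau$ and collapsing $\tau^2=e$, becomes an element of $H$ or $H\tau$. Hence
\[
\widetilde H = H \cup H\tau,
\]
and this union is disjoint because $\tau\notin H$ (if $h\tau\in H$ for some $h\in H$, then $\tau\in H$).

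Next I would observe that closedness comes at once: $H$ is closed by hypothesis and $H\tau$ is the image of $H$ under right multiplication by $\tau$, which is a homeomorphism of $G$, so $H\tau$ is closed too. Therefore $\widetilde H$ is a closed subgroup of $G$, and by the Cartan closed subgroup theorem it is a Lie subgroup.

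For normality, since $\widetilde H$ is generated by $H$ and $\tau$, it suffices to verify that both of these normalize $H$; but $H$ normalizes itself trivially, and $\tau H\tau^{-1}=H$ is exactly the hypothesis $\tau\in \mathrm N(H)$. The quotient $\widetilde H / H$ has the two cosets $H$ and $H\tau$, so $[\widetilde H:H]=2$ and $\widetilde H/H\cong \mathbb Z_2$. The quotient map
\[
\rho:\widetilde H\longrightarrow \widetilde H/H\cong \mathbb Z_2
\]
is then the desired surjective homomorphism, with $\rho(H)=1$ and $\rho(\tau)=-1$.

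There is no real obstacle here; the only point that needs care is the normal-form reduction, where one must track that each time a $\tau$ is commuted past an $h\in H$ one gets another element of $H$ (thanks to $\tau\in\mathrm N(H)$) rather than leaving the subgroup. Everything else is a formal consequence of $\tau\notin H$ and $\tau^2=e$.
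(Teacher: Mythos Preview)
Your proof is correct and follows essentially the same approach as the paper: both establish the normal form $\widetilde H=H\cup H\tau$, deduce normality and index two, and take $\rho$ to be the quotient map. The only cosmetic difference is in the closedness argument---the paper uses a sequential argument (pass to a subsequence lying entirely in $H$ or in $H\tau$), while you observe directly that $H\tau$ is the image of the closed set $H$ under the homeomorphism of right translation by $\tau$; both are equally valid.
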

\begin{proof}
It is easy to check that $\widetilde H=\{g\tau^i:\, g\in H, i=0,1\}$ and so  $H$ is a normal subgroup of $\widetilde H$ of index $2$. In particular, the natural projection
\begin{equation}\label{eq:rho}
\rho:\widetilde H \longrightarrow \widetilde H/H \cong \mathbb Z_2,
\end{equation}
defines an homeomorphism satisfying $\rho(H)=1$ and $\rho(\tau)=-1$. Finally, we prove that $\widetilde H$ is closed and so it is a Lie group.

Let $\{h_n\} \subset \widetilde H$ be a sequence converging to some $h_0 \in G$. Up to subsequence we may assume that $\rho(h_n)=1$ or $\rho(h_n )=-1$. This means that $h_n \in H$ or $h_n=s_n \tau$, where $s_n \in H$ and so
$h_0 \in \widetilde H$.
\end{proof}

 To extend some results
on existence of nonradial solutions for partial differential equations in $\mathbb R^{n}$
especially in the cases $n=3$ and $n=5$ (see the Introduction),
 the next  two examples will be useful.
They can be seen as an
application of the previous Lemma \ref{toritto-tecnico} and the next Lemma
\ref{toritto-non-radiali}, to which the reader is explicitly referred.
 However this last lemma will be proved in a more general situation in Section
\ref{sec:SSnoncompact}.
\begin{Ex}\label{ex:fondamentali5}
Let $H=\mathrm{SO}(3) \times \mathrm{SO}(2) \subset \mathrm{O}(5)$ as follows
\[
(A,B) \mapsto \left(\begin{array}{cl} A & 0 \\ 0 & B \end{array}\right).
\]
Let $\tau:\mathbb R^5 \longrightarrow \mathbb R^5$ given by
\[
\tau(x_1,x_2,x_3,x_4,x_5)=(-x_1,-x_2,-x_3,x_4,x_5).
\]
Then $\tau$ is an isometry satisfying $\tau^2=\mathrm{Id}$ and $\tau H \tau =H$. By Lemma \ref{toritto-tecnico} the subgroup $\widetilde H$ generated by $H$ and $\tau$ is closed and there exists a surjective homomorphism $\rho:\widetilde H \longrightarrow \mathbb Z_2$ such that $\rho(H)=1$.  Note that $\widetilde H=\mathrm{O}(3) \times \mathrm{SO}(2)$. By Lemma  \ref{toritto-non-radiali}
(note that  $H$ is compatible by Theorem \ref{toritto-compatto-euclideo}) the action
\[
\widetilde H \times H^1 (\mathbb R^5) \longrightarrow H^1 (\mathbb R^5), \qquad (g,u)\mapsto \rho(g) u(g^{-1} \cdot)
\]
is isometric, the embedding
\[
H^1_{\widetilde H}(\mathbb R^5 ) \hookrightarrow L^p (\mathbb R^5 ),\ 2 < p < 2^*,
\]
is compact and finally $H^1_{\widetilde H}(\mathbb R^5 )$ does not contains radial functions,
except the null one.
\end{Ex}
\begin{Ex}\label{ex:fondamentali3}
Let $H=\mathrm{SO}(2) \times \mathbb Z \subset \mathrm{Iso}(\mathbb R^3, \langle \cdot , \cdot \rangle)$ being
\[
(A,n)  \mapsto \left(\begin{array}{cl} A & 0 \\ 0 & 1 \end{array}\right) + \left[\begin{array}{c} 0 \\ 0 \\ n \end{array}\right].
\]
Pick $\tau:\mathbb R^3 \longrightarrow \mathbb R^3$ given by
\[
\tau(x_1,x_2,x_3)=(-x_1,x_2,x_3).
\]
It is easy to check  $\tau^2=\mathrm{Id}$ and $\tau H \tau=H$. Denote by $\widetilde H$ the subgroup generated by $H$ and $\tau$. Applying Lemma \ref{toritto-tecnico} and Lemma \ref{toritto-non-radiali}
(note again that  $H$ is compatible by Theorem \ref{toritto-compatto-euclideo})
the action
\[
\widetilde H \times H^1 (\mathbb R^3) \longrightarrow H^1 (\mathbb R^3), \qquad (g,u)\mapsto \rho(g) u(g^{-1} \cdot)
\]
is isometric, the embedding
\[
H^1_{\widetilde H}(\mathbb R^3 ) \hookrightarrow L^p (\mathbb R^3 ),\ 2 < p < 2^*,
\]
is compact and finally $H^1_{\widetilde H}(\mathbb R^3 )$ does not contains radial functions,
except of course the null function. Observe that the functions in $H^1_{\widetilde H}(\mathbb R^3 )$ are periodic
in the third variable.
\end{Ex}

\begin{remark}\label{rem:fractional}

Of course  the examples above are easily generalized to the fractional Sobolev space $H^{s}(\mathbb R^{n})$ where $n\in\{3,5\}$ and $2^{*}$
is replaced by the critical exponent $2^{*}_{s} = 2n/(n-2s), n>2s.$
\end{remark}

 \subsection{Existence of nonradial solutions  for some elliptic PDEs.}\label{subsec:appl}
We show now how our method permits to obtain multiplicity results of nonradial solutions
 for  some scalar field equations which enjoy the radial symmetry. Known result are also recovered.
 Just to give an idea of how our method works, as we anticipated into the Introduction,
 we limit ourself to few particular equations.
  Of course many other equations can be treated with the same approach.

\subsubsection{A classical scalar field equation}
In particular we are able to find nonradial solutions for an elliptic equation in the whole space $\mathbb R^{n}, n\geq3$.
Consider the problem
\begin{equation}\label{eq:problema1}
-\Delta u + b(|x|) u =f(|x|,u) \quad \text{ in } \mathbb R^{n}, \ n\geq3
\end{equation}
under the assumptions
\begin{itemize}
\item[(1)]\label{h1} $b\in C([0,+\infty, \mathbb R)$ is bounded from below by a positive constant $a_{0}$.
\item[(2)]\label{h2} $f\in C ([0,+\infty) \times \mathbb R, \mathbb R)$ and there are positive constants $a_{1}, R$ and a constant
$1<q<(n+2)/(n-2)$ such that
$$|f(r,u)| \leq a_{1} |u|^{q} \quad \text{for any } r\geq0, \ |u|\geq R.$$
\item[(3)]\label{h3} There exists $\mu>2$ such that
$$\mu F(r,u):= \mu \int_{0}^{u}f(r,v)dv \leq uf(r,u) \quad \text{for any } r\geq0, \ u\in \mathbb R.$$
\item[(4)] \label{h4} There exists $K>0$ such that $\inf_{r>0, |u|=K} F(r,u)>0$.
\item[(5)] \label{h5} $f(r,u)=o(|u|)$ for $u\to 0$ uniformly in $r\geq0$.
\item[(6)]\label{h6} $f$ is odd in $u: f(r,-u) = - f(r,u)$ for any $r\geq0, u\in \mathbb R$.
\end{itemize}

The above problem has been considered in $\mathbb R^{n}, n=4$ or $n\geq6$, in \cite{BW} which proved the existence of infinitely many solutions in $H^{1}(\mathbb R^{n})$ which are not radial.

We are now able to achieve the same conclusion in $\mathbb R^{n}$
for any $n \geq4$, extending \cite[Theorem 2.1]{BW}.
\begin{theorem}\label{th:BW}
Under the previous assumptions  problem \eqref{eq:problema1} possesses an unbounded sequence of solutions $\{\pm u_{k}\}_{k\in \mathbb N}$ in $\mathbb R^{n}, n\geq4$ which are nonradial.
\end{theorem}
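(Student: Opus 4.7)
The plan is to apply a $\mathbb{Z}_{2}$-symmetric critical point theorem (the Fountain theorem, or equivalently the symmetric Mountain Pass) to the energy functional
\[
\phi(u) = \frac{1}{2}\int_{\mathbb R^n}\bigl(|\nabla u|^2 + b(|x|)u^2\bigr)\,dx - \int_{\mathbb R^n} F(|x|,u)\,dx,
\]
whose critical points on $H^{1}(\mathbb R^{n})$ are precisely the weak solutions of \eqref{eq:problema1}. Assumption (1) makes the quadratic part equivalent to the standard $H^{1}$ inner product; (2) and (5) give $\phi\in C^{1}$ with subcritical growth; (3)--(4) supply the mountain pass geometry together with the Ambrosetti--Rabinowitz condition; (6) makes $\phi$ even. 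Because $H^{1}(\mathbb R^{n})\hookrightarrow L^{p}(\mathbb R^{n})$ is not compact, the idea is to restrict $\phi$ to a suitable invariant subspace and lift the critical points back via Palais' principle of symmetric criticality.

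For every $n\geq 4$ I would construct a closed subgroup $\widetilde{H}\subset\mathrm{Iso}(\mathbb R^{n},\langle\cdot,\cdot\rangle)$ and a sign homomorphism $\rho\colon\widetilde{H}\to\mathbb Z_{2}$ provided by Lemma \ref{toritto-tecnico}, and define the isometric action $(g\cdot u)(x)=\rho(g)\,u(g^{-1}x)$ on $H^{1}(\mathbb R^{n})$. The construction splits into cases. For $n=4$ or $n\geq 6$ I use the original Bartsch--Willem choice $H=\mathrm{O}(m)\times \mathrm{O}(m)\times \mathrm{O}(n-2m)$ with $2\leq m\leq n/2$, $2m\neq n-1$, together with the swap $\tau(x_{1},x_{2},x_{3})=(x_{2},x_{1},x_{3})$. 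For the genuinely new case $n=5$ I adopt Example \ref{ex:fondamentali5}, namely $H=\mathrm{SO}(3)\times\mathrm{SO}(2)$ and $\tau(x_{1},\ldots,x_{5})=(-x_{1},-x_{2},-x_{3},x_{4},x_{5})$, so that $\widetilde{H}=\mathrm{O}(3)\times\mathrm{SO}(2)$. In each case the connected component of $H$ has positive dimension and fixed-point set at most $\{0\}$ in $\mathbb R^{n}$, so Proposition \ref{toritto-sostanzioso} applies and Theorem \ref{toritto-compatto-euclideo} yields the compact embedding $H^{1}_{\widetilde{H}}(\mathbb R^{n})\hookrightarrow L^{p}(\mathbb R^{n})$ for $2<p<2^{*}$. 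The lemma on non-radial invariants (Lemma \ref{toritto-non-radiali}, already invoked in Examples \ref{ex:fondamentali5} and \ref{ex:fondamentali3}) shows that $H^{1}_{\widetilde{H}}(\mathbb R^{n})$ contains no nonzero radial function: any radial $u$ satisfies $u(g^{-1}x)=u(x)$ for every $g\in\widetilde H$, whereas by construction $\tau\cdot u=-u$, forcing $u\equiv 0$.

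With these ingredients the proof concludes routinely. Since $b$ and $F(|\cdot|,\cdot)$ are radial in $x$ and $F(r,\cdot)$ is even in $u$ by (6), the functional $\phi$ is $\widetilde{H}$-invariant, so $\psi:=\phi|_{H^{1}_{\widetilde{H}}(\mathbb R^{n})}$ is well defined, even and of class $C^{1}$. The compact embedding combined with (2) and (3) yields the Palais--Smale condition for $\psi$ in the standard way. The Fountain theorem then produces an unbounded sequence of critical pairs $\pm u_{k}$ of $\psi$; by symmetric criticality each $u_{k}$ is a critical point of $\phi$ on the whole of $H^{1}(\mathbb R^{n})$, i.e.\ a solution of \eqref{eq:problema1}, and nonradiality of the nonzero $u_{k}$ is built into $H^{1}_{\widetilde{H}}(\mathbb R^{n})$ by construction. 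The only genuinely new step with respect to \cite{BW} is the dimension $n=5$: the main obstacle is ensuring that the reflection $\tau$ of Example \ref{ex:fondamentali5}, whose fixed set is the non-trivial hyperplane $\{x_{1}=x_{2}=x_{3}=0\}$, does not destroy either the compact embedding obtained for $H$ alone or the exclusion of radial invariants. Both properties survive precisely because Theorem \ref{toritto-compatto-euclideo} and Lemma \ref{toritto-non-radiali} are insensitive to the enlargement from $H$ to $\widetilde H=H\cup H\tau$, so the case $n=5$ is absorbed into the same scheme as the other dimensions.
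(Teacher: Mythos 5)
Your proposal is correct and follows essentially the same route as the paper: the cases $n=4$ and $n\geq 6$ are delegated to Bartsch--Willem, and the new case $n=5$ is handled exactly via Example \ref{ex:fondamentali5} (the group $\widetilde H=\mathrm{O}(3)\times\mathrm{SO}(2)$ with the sign character $\rho$), using the compact embedding from Theorem \ref{toritto-compatto-euclideo}, the exclusion of radial invariants from Lemma \ref{toritto-non-radiali}, the $\widetilde H$-invariance of $\phi$ guaranteed by the oddness assumption (6), and Palais' symmetric criticality plus the symmetric mountain pass/Fountain theorem. No substantive differences from the paper's argument.
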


It remains only to show the case $n=5$.
Let
$$\phi(u) = \frac{1}{2}\int_{\mathbb R^{5}} |\nabla u|^{2} +\frac{1}{2}\int_{\mathbb R^{n5}} b(|x|) u^{2} -\int_{\mathbb R^{5}} F(|x|, u)$$
be the energy functional associated to problem \eqref{eq:problema1}
which is well defined and $C^{1}$ on the subspace of $H^{1}(\mathbb R^{5})$
$$X= \left\{ u\in H^{1}(\mathbb R^{5}) : \int_{\mathbb R^{5}} b(|x|)u^{2}<+\infty \right\}.$$
This space is continuously embedded into $H^{1}(\mathbb R^{5})$, then into $L^{p}(\mathbb R^{5})$,
 but not compactly.
However for what we have seen before (see Example \ref{ex:fondamentali5}) 
 the closed and infinite dimensional space
of fixed points of $X$ for the group $\widetilde H$, $X_{\widetilde H}$,
has compact embedding into $L^{p}(\mathbb R^{5}), p\in (2,2^{*})$,
and then, being the functional $\phi$ invariant under the action of $\widetilde H$
(which is important to apply the Palais' Principle)
 the conclusion follows exactly as in \cite{BW}.

\medskip
As a consequence of Example \ref{ex:fondamentali3}  we have also that the problem in $\mathbb R^{3}$ has infinitely many
solutions which are radially symmetric in the first two variables and periodic in the third one;
hence non radial in $\mathbb R^{3}$.

Of course, the fact that the above equation has infinitely many radial solutions
in $\mathbb R^{n}, n\geq3$, is well known and obtained by taking advantage
of the compact embedding of the radial functions.

\bigskip

\subsubsection{A nonlocal fractional scalar field equation}
The above argument also works for the following system of fractional elliptic equations.

Given $\omega>0$, $n\geq 4$,
$\alpha\in (0,n), s\in (0,1)$ and  $1+\alpha/n<p<(n+\alpha)/(n-2s)$ consider the problem
  \begin{equation}\label{eq:problema2}
\begin{cases}
 (-\Delta)^{s} u+\omega u=\varphi |u|^{p-2}u, &
 \quad \text{ in } \mathbb R^{n} \medskip  \\
(-\Delta)^{\alpha/2} \varphi = \gamma(\alpha)|u|^{p},  &\quad \text{ in } \mathbb R^{n}
\end{cases}
\end{equation}
where $\gamma(\alpha):=\frac{\pi^{n/2}2^{\alpha} {\Gamma(\alpha/2)}}{{\Gamma({n}/{2}-{\alpha}/{2})}}$,
$\Gamma$ is the gamma function, and the unknowns $u,\varphi$ are found in the fractional
Sobolev spaces
$$u\in H^{s}(\mathbb R^{n}), \quad \varphi\in \dot{H}^{\alpha/2}(\mathbb R^{n}).$$
The search of solutions for such a problem is reduced to find critical points $u\in H^{s}(\mathbb R^{n})$
of the  following $C^{1}$ functional
$$E_{\omega}(u)=
\int_{\mathbb R^{n}}|(-\Delta)^{s/2}u|^{2}+\frac{\omega}{2}\int_{\mathbb R^{n}}u^{2} -\frac{1}{2p}
\int_{\mathbb R^{n}} \varphi_{u} |u|^{p}, \quad \text{where }  \varphi_{u}=\frac{1}{|\cdot|^{n-\alpha}}*|u|^{p}.
$$
Then in the following we will speak also of ``solution $u$'' of \eqref{eq:problema2}
since $\varphi_{u}$ is univocally determined by $u$ in virtue of the second equation.
See \cite{DSS} for the details, where the problem is addressed and
 where, among other results, the multiplicity of nonradial
solutions has been proved  in the cases $n=4$ and $n\geq6$ by using the symmetric Mountain Pass Theorem.

We are able to obtain a similar result also in $\mathbb R^{5}$.
Recall Remark \ref{rem:fractional}
to deal with  $H^{s}(\mathbb R^{5})$.
Let $\widetilde H$ as in Example \ref{ex:fondamentali5}
and
 $H^{s}_{\widetilde H}(\mathbb R^{5})$
 be
 the closed and infinite dimensional subspace of fixed points for this action.
It is easy to see that if $u\in H^{s}_{\widetilde H}(\mathbb R^{5})$,
then also $\varphi_{u}$ enjoys the same symmetry
and that the functional $E_{\omega}$ is invariant for this action
(and then the Palais' Principle applies).
Since  $H^{s}_{\widetilde H}(\mathbb R^{5})$ has compact embedding into $L^{p}(\mathbb R^{5}), 2<p<2^{*}_{s}$ we can prove the Palais-Smale condition and conclude exactly as in
\cite[Theorem 5.3]{DSS} obtaining explicitly the following result covering the case $n=5$.
\begin{theorem}\label{th:DSS}
Under the previous assumptions problem \eqref{eq:problema2} possesses an unbounded sequence of
solutions $\{\pm u_{k}\}_{k\in \mathbb N}$ in  $\mathbb R^{n}, n\geq4$ which are nonradial.
\end{theorem}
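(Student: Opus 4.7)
The plan is to use the abstract framework already developed, reducing the proof to checking that the symmetric Mountain Pass argument of \cite[Theorem 5.3]{DSS} applies verbatim once we work on the correct group-invariant subspace. For $n=4$ and $n\geq 6$ the statement is precisely \cite[Theorem 5.3]{DSS}, so the only genuinely new case is $n=5$, on which I would focus.

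Following Example \ref{ex:fondamentali5} together with Remark \ref{rem:fractional}, let $\widetilde H=\mathrm{O}(3)\times\mathrm{SO}(2)$ with the surjective homomorphism $\rho\colon\widetilde H\to\mathbb Z_{2}$ provided by Lemma \ref{toritto-tecnico}, and define the isometric action on $H^{s}(\mathbb R^{5})$ by
\[
(g,u)\longmapsto \rho(g)\,u(g^{-1}\cdot).
\]
Denote by $H^{s}_{\widetilde H}(\mathbb R^{5})$ the closed subspace of fixed points. Example \ref{ex:fondamentali5} and Remark \ref{rem:fractional} ensure that this subspace is infinite dimensional, that the embedding $H^{s}_{\widetilde H}(\mathbb R^{5})\hookrightarrow L^{q}(\mathbb R^{5})$ is compact for every $2<q<2^{*}_{s}$, and that it contains no nontrivial radial function.

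Next I would verify that $E_{\omega}$ is invariant under $\widetilde H$. Write $v=\rho(g)\,u(g^{-1}\cdot)$. Since $g$ is a Euclidean isometry and $\rho(g)=\pm 1$, the kinetic term $\int|(-\Delta)^{s/2}u|^{2}$ and the mass term $\int u^{2}$ are preserved. For the nonlocal term, $|v|^{p}=|u(g^{-1}\cdot)|^{p}$, and because the Riesz kernel $|x|^{\alpha-n}$ commutes with Euclidean isometries one gets $\varphi_{v}(x)=\varphi_{u}(g^{-1}x)$; a change of variables then yields $\int \varphi_{v}|v|^{p}=\int \varphi_{u}|u|^{p}$. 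Hence $E_{\omega}$ is $\widetilde H$-invariant, and by Palais' Principle any critical point of $E_{\omega}$ restricted to $H^{s}_{\widetilde H}(\mathbb R^{5})$ is a genuine critical point on $H^{s}(\mathbb R^{5})$, and thus gives a solution of \eqref{eq:problema2}.

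Finally I would reproduce the symmetric Mountain Pass argument of \cite[Theorem 5.3]{DSS}. Under the standing assumption $1+\alpha/n<p<(n+\alpha)/(n-2s)$, the Hardy--Littlewood--Sobolev inequality controls $\int\varphi_{u}|u|^{p}$ by $\|u\|_{L^{q}}^{2p}$ with $q=2np/(n+\alpha)\in(2,2^{*}_{s})$, so the compact embedding above gives the strong convergence of the nonlinear term along bounded sequences in $H^{s}_{\widetilde H}(\mathbb R^{5})$; combined with the standard Ambrosetti--Rabinowitz type estimates this yields the Palais--Smale condition. The $\mathbb Z_{2}$-symmetry $u\mapsto-u$ is built into $E_{\omega}$, since both $\|u\|^{2}$ and $\varphi_{u}|u|^{p}$ are even in $u$, so the symmetric Mountain Pass Theorem applies on the infinite-dimensional space $H^{s}_{\widetilde H}(\mathbb R^{5})$ and produces an unbounded sequence $\{\pm u_{k}\}$ of critical points; these are automatically nonradial because $H^{s}_{\widetilde H}(\mathbb R^{5})$ contains no nontrivial radial function. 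The point I expect to require most care is the bookkeeping with the character $\rho$: one must check that the sign $\rho(g)=\pm 1$ interacts correctly with every piece of $E_{\omega}$, but since it enters the nonlocal term only through $|v|^{p}$, which absorbs signs, no further assumption on $p$ is needed and the argument goes through uniformly in the parameters.
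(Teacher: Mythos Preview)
Your proposal is correct and follows essentially the same approach as the paper: reduce to $n=5$, work on $H^{s}_{\widetilde H}(\mathbb R^{5})$ with $\widetilde H$ from Example \ref{ex:fondamentali5} (via Remark \ref{rem:fractional}), check $\widetilde H$-invariance of $E_{\omega}$ so that Palais' Principle applies, use the compact embedding to obtain the Palais--Smale condition, and conclude by the symmetric Mountain Pass argument of \cite[Theorem 5.3]{DSS}. Your write-up is in fact more explicit than the paper's on two points (the verification that $\varphi_{v}(x)=\varphi_{u}(g^{-1}x)$ via the rotation invariance of the Riesz kernel, and the identification of the Lebesgue exponent $q=2np/(n+\alpha)\in(2,2^{*}_{s})$ needed for the Hardy--Littlewood--Sobolev estimate), but the strategy is identical.
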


As before, in the case of $\mathbb R^{3}$ we can achieve existence of solutions
radially symmetric in the first two variables and  periodic in the third one.

Again the existence of infinitely many radial solutions in $\mathbb R^{n}, n\geq3$, is known
and obtained in a standard way by using the compact embedding of the radial functions.

\bigskip

\subsubsection{A Schr\"odinger-Poisson system}
With our approach we are able to extend a result of d'Avenia \cite{P}
in $\mathbb R^{n}, n\geq4$ with a simpler proof by using the compact embedding
of our working space.
Let us start with the case $n=5$.
Following \cite{P} we consider
  \begin{equation*}
\begin{cases}
 -\frac{1}{2}\Delta u+ \omega u+\varphi u = |u|^{p-2}u, &
 \quad \text{ in } \mathbb R^{5}, \medskip  \\
-\Delta \varphi = \gamma(2) u^{2},  &\quad \text{ in } \mathbb R^{5}
\end{cases}
\end{equation*}
in the unknowns $u,\varphi:\mathbb R^{5}\to \mathbb R$ and $\omega>0$.
The constant $\gamma(2)$ is defined as above.
Here $p\in(4,6)$ is given.
Then after rescaling, the above problem is equivalent to find
solutions
$$u\in H^{1}(\mathbb R^{5}), \ \ \varphi\in D^{1,2}(\mathbb R^{5})\ \  \text{\ not radial, and } \ \lambda>0$$
for the system
  \begin{equation}\label{eq:problema3}
\begin{cases}
 -\frac{1}{2}\Delta u+ u+\varphi u = \lambda|u|^{p-2}u, &
 \quad \text{ in } \mathbb R^{5} \medskip  \\
-\Delta \varphi = \gamma(2) u^{2},  &\quad \text{ in } \mathbb R^{5}.
\end{cases}
\end{equation}
Equivalently the problem is reduced to find a critical point, which is not radially symmetric, of
the functional
$$J(u) = \frac{1}{4}\int_{\mathbb R^{5}} |\nabla u|^{2} +\frac{1}{2}\int_{\mathbb R^{5}} u^{2} + \int_{\mathbb R^{5}}\varphi_{u} u^{2}, \ \text{ where } \ \varphi_{u} = \frac{1}{|\cdot|^{3}} * u^{2},
$$
restricted to the sphere $$S=\left\{ u\in H^{1}(\mathbb R^{5}): \int_{\mathbb R^{5}} |u|^{p} =1\right\}$$
and $\lambda$ is the associated Lagrange multiplier. For this reason (as before, since $\varphi$ is uniquely determined by $u$) we will speak
of ``solutions $u$ and $\lambda$'' of \eqref{eq:problema3}.

With our approach we can restrict the functional to
the subspace of fixed points for the action considered in Example \ref{ex:fondamentali5}, i.e.
$H^{1}_{\widetilde H}(\mathbb R^{5})$, that we know it has compact embedding into $L^{p}(\mathbb R^{5})$.
Then the set
$$S_{\widetilde H}=\left\{ u\in H_{\widetilde H}^{1}(\mathbb R^{5}): \int_{\mathbb R^{5}} |u|^{p} =1\right\}$$
is weakly closed.
 It is easy to see  that if $u\in H^{1}_{\widetilde H}$ then also $\varphi_{u}: =\frac{1}{|\cdot|^{3}} * u^{2}$
 enjoys the same symmetry
 (again, important to apply the Palais's Principle).
Moreover  the energy functional restricted to $S_{\widetilde H}$
 is   bounded below
 and coercive (as proved in \cite{P}); but also, weakly lower semicontinuous
 and satisfies the Palais-Smale condition, as it is standard to see by using the compact embedding
 just stated.

 Then not only the existence of a minimizer is guaranteed, but also,
since the functional is even, by  the Ljusternick-Schnirelmann theory
we deduce the following

\begin{theorem}\label{th:P}
Problem \eqref{eq:problema3} has infinitely many solutions $(u_{n}, \lambda_{n})$
with $u_{n}$ nonradial.
\end{theorem}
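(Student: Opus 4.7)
The plan is to follow the variational scheme of \cite{P}, but to replace the original symmetry used there with the larger symmetry group $\widetilde H$ of Example \ref{ex:fondamentali5}: this gains the compact embedding $H^1_{\widetilde H}(\mathbb R^5)\hookrightarrow L^p(\mathbb R^5)$ with $2<p<2^*$ (Theorem \ref{toritto-compatto-euclideo}), which will upgrade the mere boundedness/coercivity of $J$ on $S$ to a full Palais--Smale condition, and it simultaneously rules out radial solutions since $H^1_{\widetilde H}(\mathbb R^5)$ contains no nontrivial radial function.

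First I would check that $J$ is $\widetilde H$-invariant for the action $(g,u)\mapsto \rho(g)u(g^{-1}\cdot)$ and that $S_{\widetilde H}$ is a smooth $C^1$-submanifold of $H^1_{\widetilde H}(\mathbb R^5)$. The gradient and mass terms depend on $u$ only through $u^2$, so the sign coming from $\rho(g)\in\{\pm 1\}$ disappears and these terms are preserved by the underlying isometric action on $\mathbb R^5$. For the nonlocal term it suffices to observe that for $u\in H^1_{\widetilde H}(\mathbb R^5)$ one has $u^2$ invariant under $\widetilde H$ in the usual (unsigned) sense, and then a change of variables using that $g$ is an isometry yields $\varphi_{gu}(x)=\varphi_u(g^{-1}x)$, so that $\int_{\mathbb R^5}\varphi_u u^2$ is $\widetilde H$-invariant too. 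By the Palais Symmetric Criticality Principle, constrained critical points of $J|_{S_{\widetilde H}}$ are constrained critical points of $J|_{S}$, and the Lagrange multiplier rule then produces the pair $(u,\lambda)$ solving \eqref{eq:problema3}.

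Next I would verify the Palais--Smale condition for $J|_{S_{\widetilde H}}$. Let $\{u_n\}\subset S_{\widetilde H}$ with $J(u_n)$ bounded and $(J|_{S_{\widetilde H}})'(u_n)\to 0$. Coercivity of $J$ on $S$, proved in \cite{P}, gives boundedness in $H^1$, so up to a subsequence $u_n\rightharpoonup u$ in $H^1_{\widetilde H}(\mathbb R^5)$; by the compact embedding $u_n\to u$ strongly in $L^p$, so $u\in S_{\widetilde H}$. Using Hardy--Littlewood--Sobolev to control $\int\varphi_{u_n}u_n^2$, one passes to the limit in the nonlocal term, recovers strong convergence in $H^1$, and identifies the corresponding Lagrange multiplier $\lambda_n\to\lambda$.

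Finally, to obtain infinitely many solutions I would apply the standard Ljusternick--Schnirelmann theory on the $\mathbb Z_2$-symmetric $C^1$-manifold $S_{\widetilde H}$ (even functional, bounded below, coercive, satisfying PS), defining the minimax levels
\[
c_k=\inf_{A\in \Sigma_k}\sup_{u\in A} J(u),\qquad \Sigma_k=\{A\subset S_{\widetilde H}:\, A=-A,\ A\text{ compact},\ \gamma(A)\geq k\},
\]
where $\gamma$ is the Krasnoselskii genus. The fact that $H^1_{\widetilde H}(\mathbb R^5)$ is infinite-dimensional ensures $\Sigma_k\neq\emptyset$ and, together with PS, that each $c_k$ is a critical value and that $c_k\to\sup J=+\infty$ (i.e.\ the sequence is unbounded). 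This yields infinitely many pairs $\pm u_k$ of nonradial solutions, with associated multipliers $\lambda_k$. The main point that requires care is the PS verification in the presence of the nonlocal coupling: this is precisely where the new compact embedding provided by Theorem \ref{toritto-compatto-euclideo} does the essential work and is the reason the argument of \cite{P}, valid in $\mathbb R^3$ only through a weaker symmetry, now extends cleanly to $\mathbb R^5$.
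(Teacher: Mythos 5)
Your proposal follows essentially the same route as the paper: restriction of $J$ to $S_{\widetilde H}$, invariance of $\varphi_u$ and of $J$ under the $\widetilde H$-action so that Palais' symmetric criticality principle applies, coercivity and boundedness below taken from \cite{P}, the Palais--Smale condition via the compact embedding of Theorem \ref{toritto-compatto-euclideo}, and Ljusternick--Schnirelmann theory for the even constrained functional on $S_{\widetilde H}$. The only inaccuracy is the side claim that $c_k\to\sup J=+\infty$ (an unboundedness assertion neither needed for, nor stated in, Theorem \ref{th:P}); the standard genus argument already yields infinitely many distinct pairs $\pm u_k$ without it.
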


The above theorem  for the  cases $n=4$ and $n\geq6$ can be easily obtained by the approach of \cite{BW}.

Of course our methods permits to have solutions in $\mathbb R^{3}$
which are radial in the first two variables and periodic in the third one.

Again, also in this case the existence of radially symmetric solutions is well-known, for any $n\geq3$.
\section{The case of Hadamard manifolds}\label{sec:Hadamard}
Let $(M,g)$ be an Hadamard manifold of $\dim M=n$. This means $(M,g)$ is a simply connected complete Riemannian manifold of non positive sectional curvature, see  \cite{eberlein, helgason}. Hence if $p\in M$, then $\exp_p: T_p M \longrightarrow M$ is a diffeomorphism. Since $M$ is simply connected it is orientable. Let $\{U_i,\phi_i\}_{i\in I}$ be an orientable atlas. This means that for every $i\in I$,  $U_i \subset \mathbb R^n$ and $\phi_i : U_i \longrightarrow \widetilde U_i\subset M$ is a positive diffeomorphism. The Riemannian volume form $\nu$ is the smooth $n$-form such that
\[
\phi_i^* \nu=\sqrt{\det G} \mathrm d x_1 \wedge \cdots \wedge \mathrm d x_n,
\]
where $G=\left( g( \frac{\partial}{\partial x_i}, \frac{\partial}{\partial x_j}) \right)_{1\leq i,j\leq n}$ is the matrix associated to $g$ with respect to the basis $\{\frac{\partial}{\partial x_1},\ldots,\frac{\partial}{\partial x_n} \}$. It is easy to check that any isometry $T:M\longrightarrow M$ satisfies $T^* \nu=\nu$, i.e., $T$ preserves the Riemannian volume form $\nu$.

Let $G\subset\mathrm{Iso}(M,g)$ and let $\Omega \subset M$ any $G$-invariant open and unbounded subset of $M$.
Let $H^{1}(\Omega)$ and $H^{1}_{0}(\Omega)$ be the usual Sobolev spaces defined
as for the case of $\mathbb R^{n}$, see \cite{aubin}.

The action of $G$ on $H^1_0(\Omega)$ is defined by
\[
gu(x):=u(g^{-1} x), \quad g\in G
\]
and the subspace of fixed point for this action is
\[
H^1_{0,G}(\Omega):=\left\{u\in H^1_0(\Omega): gu=u, \ \forall g\in G\right\}.
\]

Applying Proposition \ref{toritto-discreto-generale} we have the following result
which give the compactness of the Sobolev embedding in the case of a Riemannian manifold.
\begin{theorem}\label{toritto-discreto-compattezza}
Let $(M,g)$ be an Hadamard manifold and
let $G\subset\mathrm{Iso}(M,g)$ be a closed and discrete subgroup with infinite elements. If $M/G$ is compact, then the following embedding
\[
H^1_{0,G}(M) \hookrightarrow L^p (M),\ 2 < p < 2^*,
\]
is compact.
\end{theorem}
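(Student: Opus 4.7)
The plan is to run the standard Willem-style compactness argument \cite[pp.~16--17]{willem}, which in the present setting collapses to a short $G$-invariance estimate because Proposition \ref{toritto-discreto-generale} gives the stronger condition $\mathrm{m}(z,r,G)=+\infty$ at every point $z\in M$, not just in the limit $d(p,y)\to+\infty$ required by Definition \ref{def-compatible}.

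First I would invoke Proposition \ref{toritto-discreto-generale}: the hypotheses ($G$ closed, discrete, infinite, and $M/G$ compact) produce an $r>0$ with $\mathrm{m}(z,r,G)=+\infty$ for all $z\in M$. Next, given a bounded sequence $\{u_k\}\subset H^1_{0,G}(M)$, note that the $G$-action $gu(\cdot):=u(g^{-1}\cdot)$ on the reflexive Hilbert space $H^1_0(M)$ is linear, continuous, and isometric, so its fixed-point set $H^1_{0,G}(M)$ is closed; up to a subsequence one has $u_k\rightharpoonup u\in H^1_{0,G}(M)$ weakly, and setting $v_k:=u_k-u$ it suffices to show $v_k\to 0$ strongly in $L^p(M)$ for $2<p<2^*$.

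The decisive step is the following. Fix $y\in M$ and $N\in\mathbb{N}$ and pick $g_1,\ldots,g_N\in G$ for which the balls $\mathcal{B}_r(g_j y)$ are pairwise disjoint. Because every isometry of $(M,g)$ preserves the Riemannian volume form $\nu$ (as recalled at the beginning of Section \ref{sec:Hadamard}), the $G$-invariance of $v_k$ yields $\int_{\mathcal{B}_r(g_j y)}|v_k|^2\,d\nu=\int_{\mathcal{B}_r(y)}|v_k|^2\,d\nu$ for every $j$, and therefore
\begin{equation*}
N\int_{\mathcal{B}_r(y)}|v_k|^2\,d\nu\;\le\;\int_M|v_k|^2\,d\nu\;\le\;\|v_k\|_{H^1_0(M)}^2\;\le\;\mathrm{const.}
\end{equation*}
Letting $N\to\infty$ forces $\int_{\mathcal{B}_r(y)}|v_k|^2\,d\nu=0$ uniformly in $y$ and $k$, whence $v_k\equiv 0$ and the claimed $L^p$-convergence is automatic.

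The main point I would need to verify carefully is that the three ingredients just used actually hold here: reflexivity of $H^1_0(M)$, the isometric character of the $G$-action on it, and isometry-invariance of $\nu$. All three are standard on any Riemannian manifold and are explicitly recorded in the preamble of Section \ref{sec:Hadamard}, so I expect no genuine obstacle; the Hadamard assumption enters the proof exclusively via Proposition \ref{toritto-discreto-generale}, and no local Rellich--Kondrachov embedding on $(M,g)$ is needed in this strong version of the lemma.
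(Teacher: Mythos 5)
Your proof is correct, but it takes a genuinely different (and in fact sharper) route than the paper, which simply invokes the Willem scheme of \cite[p.16-17]{willem}: there one combines local Rellich--Kondrachov compactness on a bounded core, the packing estimate $\int_{\mathcal B_r(y)}|u_k|^2\,d\nu\le C/\mathrm{m}(y,r,G)$ for $y$ far away, and Lions' vanishing lemma to upgrade $L^2$-smallness on balls to $L^p$-convergence. You instead exploit the fact that Proposition \ref{toritto-discreto-generale} gives $\mathrm{m}(z,r,G)=+\infty$ at \emph{every} point, so the packing estimate alone already yields $\int_{\mathcal B_r(y)}|v|^2\,d\nu=0$ for every $y$ and every $v\in H^1_{0,G}(M)$ (your volume step is sound: $g_j(\mathcal B_r(y))=\mathcal B_r(g_jy)$, isometries preserve $\nu$, and $v\circ g_j=v$, so $N$ disjoint balls each carry the same mass, which is incompatible with $N\to\infty$ unless that mass is zero). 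Since the balls $\mathcal B_r(y)$ cover $M$, this shows not merely $v_k\equiv 0$ but $H^1_{0,G}(M)=\{0\}$, so the embedding is compact for trivial reasons and neither Lions' lemma nor any local compactness is needed. You should state this conclusion explicitly rather than leaving it implicit in ``$v_k\equiv 0$'': what your computation really reveals is that in the cocompact discrete case a $G$-invariant function is determined by its values on a fundamental piece of finite volume repeated infinitely often by isometries, hence cannot lie in $L^2(M)$ unless it vanishes (compare $M=\mathbb R^n$, $G=\mathbb Z^n$: a periodic function in $L^2(\mathbb R^n)$ is zero). So Theorem \ref{toritto-discreto-compattezza} is vacuously true, and the value of your write-up is precisely in exposing this degeneracy, which the paper's citation of the general Willem argument obscures.
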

\begin{proof}
Indeed having Proposition \ref{toritto-discreto-generale} the proof  follows as in \cite[p.16-17]{willem}.
\end{proof}
Let $G\subset \mathrm{Iso}(M,g)$ be a compact subgroup. By a Theorem of Cartan, see \cite{helgason}, $G$ has a fixed point. Let $p\in M^G=\{q\in M:\, G(q)=q\}$ be a fixed point. The isotropy representation
\[
G \longrightarrow \mathrm{O}(T_p M), \qquad k \mapsto \mathrm d k_p.
\]
is injective and it satisfies
\[
k(\exp_p(v))=\exp(\mathrm d k_p (v)).
\]
Therefore the exponential map at $p$ is $G$-equivariant, i.e., it interchanges the $G$ action on $M$ with the $G$ action on $T_p M$. In the sequel we also denote by $kv=\mathrm d k_p (v)$.  It is well-known that $M^G$ is a totally geodesic submanifold of $M$ and $T_p M^G =(T_p M)^{G}$, see \cite{renatone-marcox}. Hence if the $G$ action satisfies the first condition of Theorem \ref{toritto-compatto-euclideo}, keeping in mind that any point can be joined by a unique minimizing geodesic, then $M^{G}=\{p\}$. The vice-versa holds as well.
\begin{proposition}\label{toritto-varieta}
Let $(M,g)$ be an Hadamard manifold and let $G\subset \mathrm{Iso}(M,g)$ be a compact connected subgroup of dimension bigger than one.  Assume that $M^{G}=\{p\}$. Then for any $r>0$, we have
\[
\lim_{d (p,z) \mapsto +\infty} \mathrm{m}(z,r,G)=+\infty.
\]
Therefore any $G$-invariant open unbounded subset of $M$ is compatible.
\end{proposition}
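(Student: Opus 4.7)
The plan is to reduce the statement to the Euclidean case already treated in Proposition \ref{toritto-sostanzioso} by pulling everything back to the tangent space $T_p M$ via the exponential map, and then to push the resulting disjoint-ball configuration back to $M$ using Rauch's theorem. Since $(M,g)$ is Hadamard and $p\in M^G$, the Cartan--Hadamard theorem gives a diffeomorphism $\exp_p : T_p M \longrightarrow M$ which, as recalled in the paragraph preceding the proposition, is $G$-equivariant for the isotropy representation $\rho:G\longrightarrow \mathrm{O}(T_p M)$, $g\mapsto \mathrm d g_p$. I would first verify that $\rho$ is injective: if $\mathrm d g_p=\mathrm{Id}$ and $g(p)=p$, then $g\circ\exp_p=\exp_p$, forcing $g=\mathrm{Id}_M$. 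Hence $\rho(G)$ is a closed connected subgroup of $\mathrm{O}(T_p M)\cong \mathrm{O}(n)\subset \mathrm{Iso}(\mathbb R^n,\langle\cdot,\cdot\rangle)$ of the same dimension as $G$ (so $\geq 2$), and the hypothesis $M^G=\{p\}$ together with the identity $(T_p M)^G=T_p(M^G)$ forces $(T_p M)^{\rho(G)}=\{0\}$. Thus $\rho(G)$ satisfies the hypotheses of Proposition \ref{toritto-sostanzioso}.

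Applying that proposition on $T_p M\cong \mathbb R^n$ yields, for every $r>0$,
\[
\lim_{\|v\|\to +\infty}\mathrm m^{T_p M}(v,r,\rho(G))=+\infty.
\]
To transfer this estimate to $M$, I would invoke the Rauch comparison theorem: nonpositive sectional curvature makes $\exp_p$ distance non-decreasing, i.e.
\[
d^M(\exp_p v,\exp_p w)\geq \|v-w\|\qquad \text{for all } v,w\in T_p M.
\]
Writing $z=\exp_p(v)$ and using $G$-equivariance $g z=\exp_p(\rho(g) v)$, I see that whenever $\rho(g_1)v,\dots,\rho(g_N)v$ have pairwise Euclidean distance at least $2r$, the points $g_1 z,\dots,g_N z$ have pairwise $d^M$-distance at least $2r$, so their open $\mathcal B_r$-balls in $M$ are disjoint. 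Consequently
\[
\mathrm m(z,r,G)\geq \mathrm m^{T_p M}(v,r,\rho(G)).
\]
Because radial geodesics from $p$ are globally minimizing on a Hadamard manifold, $d^M(p,z)=\|v\|$, and letting $d^M(p,z)\to +\infty$ gives the announced limit.

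The final assertion on compatibility is then immediate: for a $G$-invariant open unbounded $\Omega\subset M$, the limit in Definition \ref{def-compatible} is taken over a subclass of sequences satisfying $d(p,y)\to +\infty$, and the estimate just established is in fact uniform over that larger class. The only genuinely non-trivial ingredient is Rauch's expansion inequality for the exponential map; the rest is routine bookkeeping with the isotropy representation and the standard geometry of Hadamard manifolds, so I expect no serious obstacle beyond writing these identifications carefully.
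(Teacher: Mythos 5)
Your proposal is correct and follows essentially the same route as the paper's own proof: identify $G$ with its image under the (injective) isotropy representation on $T_pM$, apply Proposition \ref{toritto-sostanzioso} there, and push the disjoint balls forward to $M$ via the $G$-equivariant exponential map using Rauch's distance-expansion inequality and $d(p,\exp_p v)=\|v\|$. The only difference is that you spell out the injectivity of the isotropy representation and the identity $(T_pM)^{\rho(G)}=\{0\}$, which the paper records in the paragraph preceding the proposition rather than inside the proof.
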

\begin{proof}
Since $(M,g)$ has negative curvature, applying the Rauch Theorem \cite{docarmo}, if $v,w\in T_p M$, then
\[
\parallel x-y\parallel \leq  d (\exp_p (v),\exp_p (w)).
\]
Let $r>0$ and let $n\in \mathbb N$. By Proposition \ref{toritto-sostanzioso},
\[
\lim_{\parallel y \parallel \mapsto +\infty} \mathrm{m}(y,r, G)=+\infty.
\]
Let  $y\in T_p M$ and let $n \in \mathbb N$ be such that there exist $g_1,\ldots,g_n \in G$ such that $\parallel g_i y- g_j y \parallel >2r$ for $i\neq j$ and so $B_{r}(g_i y) \cap B_{r}(g_i y) =\emptyset$ for $i\neq j$. Since
\[
d (\exp_p (g_i y), \exp_p (g_j y))=d(g_i \exp_p (y),g_j \exp_p (y)) \geq \parallel g_i y - g_j y \parallel >2r,
\]
it follows $\mathcal B_{r}(g_i \exp_p (y)) \cap \mathcal B_{r}(g_i \exp_p (y)) =\emptyset$ for $i\neq j$ and so, keeping in mind $d (p,\exp_p (y))=\parallel y \parallel$, the result follows.
\end{proof}
Aa an application, we have the following compactness result for an Hadamard manifold.
\begin{theorem}\label{toritto-compatto-hadamard}
Let $(M,g)$ be an Hadamard manifold and let $G\subset \mathrm{Iso}(M,g)$ be a compact subgroup of dimension bigger or equal than one. Assume that $M^G=\{p\}$. Let $\Omega$ be
an unbounded  $G$-invariant open subset of $M$ containing $p$. Then, the following embedding
\[
H^1_{0,G}(\Omega) \hookrightarrow L^p (\Omega),\ 2 < p < 2^*,
\]
is compact.
\end{theorem}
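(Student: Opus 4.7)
The plan is to reduce to Proposition \ref{toritto-varieta} and then follow the argument of Willem \cite[p.~16--17]{willem} that was already invoked in the proofs of Theorem \ref{toritto-compatto-euclideo} and Theorem \ref{toritto-discreto-compattezza}. First, by Proposition \ref{toritto-varieta} and the hypotheses that $G$ is compact connected of dimension $\geq 1$ with $M^G=\{p\}\subset\Omega$, there exists $r>0$ such that $\mathrm{m}(z,r,G)\to +\infty$ as $d(p,z)\to +\infty$, so $\Omega$ is compatible with $G$.

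Next, take a bounded sequence $\{u_k\}\subset H^1_{0,G}(\Omega)$ and, up to a subsequence, assume $u_k \rightharpoonup 0$ weakly in $H^1_0(\Omega)$; the goal is $u_k\to 0$ strongly in $L^p(\Omega)$ for $2<p<2^*$. The Rellich--Kondrachov theorem gives $u_k\to 0$ in $L^2(\Omega\cap \mathcal B_R(p))$ for every fixed $R>0$. For the tail I would exploit the $G$-invariance of the $u_k$ together with the fact (recalled at the start of Section \ref{sec:Hadamard}) that each $g\in G$ preserves the Riemannian volume form $\nu$: for $z\in\Omega$ with $d(p,z)$ large, choose $N=\mathrm{m}(z,r,G)$ elements $g_1,\dots,g_N\in G$ with pairwise disjoint balls $\mathcal B_r(g_iz)$, so that
\[
N\int_{\mathcal B_r(z)}|u_k|^2\,d\nu = \sum_{i=1}^{N}\int_{\mathcal B_r(g_iz)}|u_k|^2\,d\nu \leq \int_\Omega |u_k|^2\,d\nu \leq C.
\]
Letting $R\to+\infty$ forces $N\to+\infty$, uniformly in $k$, and combining with the Rellich step yields
\[
\lim_{k\to\infty}\sup_{z\in \Omega}\int_{\mathcal B_r(z)}|u_k|^2\,d\nu = 0.
\]

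Finally, I would promote this non-concentration in $L^2$ to strong $L^p$ convergence via a Lions-type concentration lemma on $M$, exactly as in the Euclidean setting. The technical point, and the only real obstacle, is to have a \emph{uniform} local Sobolev inequality on balls $\mathcal B_r(z)$ of fixed radius, with constants independent of $z$. On an Hadamard manifold this is available through normal coordinates: since $\exp_z:T_zM\to M$ is a global diffeomorphism, the Rauch comparison theorem pulls back the intrinsic metric to one that dominates the Euclidean metric on the Euclidean ball of radius $r$ around the origin of $T_zM$, so the standard Euclidean Sobolev and interpolation inequalities transfer to $\mathcal B_r(z)$ with a universal constant. Once this is in place, the argument is a verbatim adaptation of \cite[Lemma 1.23]{willem}, and the embedding $H^1_{0,G}(\Omega)\hookrightarrow L^p(\Omega)$ is compact for $2<p<2^*$.
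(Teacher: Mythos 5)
Your proposal follows essentially the same route as the paper: the paper's entire proof is the single sentence that the argument is ``similar to Willem, pp.~16--17,'' i.e.\ precisely the compatibility-plus-Lions-lemma scheme you spell out, with Proposition~\ref{toritto-varieta} supplying the compatibility and the disjoint-ball volume count giving the vanishing of $\sup_z\int_{\mathcal B_r(z)}|u_k|^2$. The only caveat is that your justification of the uniform local Sobolev inequality via Rauch alone is slightly loose --- a pointwise lower bound on the pulled-back metric does not by itself transfer the Euclidean Sobolev inequality, since gradient norms and volume elements move in opposite directions; on a Cartan--Hadamard manifold this is better sourced from the Hoffman--Spruck/Croke isoperimetric--Sobolev inequality --- but the paper supplies no detail on this point either.
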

\begin{proof}
It is similar to \cite{willem} p. $16-17$.
\end{proof}
Let $p\in M$ and let $f:M \longrightarrow \mathbb R$ be a function. We say that $f$ is a radial function
(with respect to $p$) if $f(z)=f(w)$ whenever $d(p,z)=d(p,w)$. Since $\exp_p : T_p M \longrightarrow  M$ is a diffeomorphism, and keeping in mind that $d(p,z)=\parallel \exp_p^{-1} (z) \parallel$, see \cite{docarmo}, it follows that $f:M\longrightarrow \mathbb R$ is a radial function if and only if $f\circ \exp_p^{-1}:T_p M \longrightarrow \mathbb R$ is a radial function.

Let $G\subset G_p=\{g\in \mathrm{Iso}(M,g):\, gp=p\}$ be a closed subgroup. Then $H^1_{0,G}(M)$ contains the set of radial functions. Indeed, if $f:M \longrightarrow \mathbb R$ is a radial function then it is $G$-invariant due to the fact that for any $k\in G$ we have
\[
d(p,kz)=d(kp,kz)=d(p,z).
\]
\section{Symmetry and compactness for symmetric spaces of noncompact type}\label{sec:SSnoncompact}
Let $(M,g)$ be an Hadamard manifold. Let $p\in M$ and let $H \subset G_p$ be closed subgroup. Assume that there exists a closed subgroup $\widetilde H \subset G_p$ such that $H\subset \widetilde H \subset G_p$ and $\widetilde H /H$ has two elements. Then $H$ is a normal subgroup of $\widetilde H$ and the natural projection $\rho:\widetilde H \longrightarrow \widetilde H /H \cong \mathbb Z_2$ is a surjective homomorphism. By Lemma \ref{toritto-tecnico} this holds if there exists $\tau \in G_p$ such that $\tau \notin H$, $\tau^2=e$ and $\tau H \tau =H$.  Indeed the subgroup $\widetilde H$ generated by $H$ and $\tau$ is closed and $\widetilde H/ H$ has two elements.

Let $\Omega$ be
an unbounded open $\widetilde H$-invariant subset of $M$. Define
\[
\widetilde H \times H^1 (\Omega) \longrightarrow H^1(\Omega), \qquad (g,u) \mapsto  \rho(g) u(g^{-1} \cdot)=g u.
\]
It is easy to check that this map defines an isometric action of $\widetilde H$ on $H^1(\Omega)$.
Denote by $H^1_{0,\widetilde H} (\Omega)=\{u\in H^1(\Omega):\, gu=u$ for any $g\in \widetilde H\}$. Since $\rho(h)=1$ if $h\in H$, then
$ H^1_{0,\widetilde H} (\Omega)$ is a closed subspace of $H^1_{0,H}(\Omega)$.
\begin{lemma}\label{toritto-non-radiali}
Under the above assumption, if
\[
H^1_{0,H}(\Omega) \hookrightarrow L^p (\Omega),\ 2 < p < 2^*,
\]
is compact, then the embedding
\[
H^1_{0,\widetilde H}(\Omega) \hookrightarrow L^p (\Omega),\ 2 < p < 2^*,
\]
is compact and the set $H^1_{0,\widetilde H}(\Omega)$ does not contain radial functions unless $u=0$.
\end{lemma}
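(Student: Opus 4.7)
The plan is to establish the two assertions of the lemma as short consequences of the group splitting encoded in the homomorphism $\rho: \widetilde H \to \mathbb Z_2$ with kernel $H$.

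First, I would handle the compact embedding by observing that $H^1_{0,\widetilde H}(\Omega)$ sits inside $H^1_{0,H}(\Omega)$ as a closed subspace, a fact already noted in the excerpt. For any $h \in H$ we have $\rho(h)=1$, so the $\widetilde H$-invariance relation $\rho(g)\,u(g^{-1}\cdot)=u$, specialized to $g=h$, collapses to ordinary $H$-invariance $u(h^{-1}\cdot)=u$. Closedness in $H^1_0(\Omega)$ is immediate because $H^1_{0,\widetilde H}(\Omega)$ is the intersection of the closed kernels $\{u : gu-u=0\}$ as $g$ runs over $\widetilde H$ (the action being continuous and isometric). Since compactness of a linear embedding is automatically inherited by closed subspaces, the compact injection
\[
H^1_{0,\widetilde H}(\Omega) \hookrightarrow L^p(\Omega), \qquad 2<p<2^*,
\]
follows at once from the standing hypothesis on $H^1_{0,H}(\Omega)$.

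Second, I would exclude nontrivial radial fixed points by exploiting the sign introduced by $\rho$. Pick any $\tau \in \widetilde H$ with $\rho(\tau)=-1$, which exists because $\rho$ is surjective. For $u \in H^1_{0,\widetilde H}(\Omega)$ the fixed-point equation applied to $\tau$ reads
\[
u(x) \;=\; \rho(\tau)\,u(\tau^{-1}x) \;=\; -u(\tau^{-1}x) \qquad \text{for a.e. } x \in \Omega.
\]
Since $\tau \in G_p$, the inverse isometry $\tau^{-1}$ also fixes $p$, hence $d(p,\tau^{-1}x)=d(\tau^{-1}p,\tau^{-1}x)=d(p,x)$ for every $x$. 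If $u$ is radial with respect to $p$, this identity of distances forces $u(\tau^{-1}x)=u(x)$, and combining with the previous display yields $u=-u$, so $u\equiv 0$.

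I do not foresee any real obstacle: the first part is a purely formal inheritance property of compact embeddings, and the second reduces to a single line once one exploits the crucial sign $\rho(\tau)=-1$ together with the fact that $\tau^{-1}$ preserves distances from the basepoint $p$. The only mildly delicate point to record is that pullback by the diffeomorphism $\tau^{-1}$ preserves almost-everywhere equalities, so both computations are legitimate in the Sobolev setting rather than merely pointwise.
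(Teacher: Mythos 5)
Your proposal is correct and follows essentially the same route as the paper: the compact embedding is inherited because $H^1_{0,\widetilde H}(\Omega)$ is a closed subspace of $H^1_{0,H}(\Omega)$, and a radial $u$ would satisfy $u=-u$ after applying the fixed-point relation to any $\tau$ with $\rho(\tau)=-1$, using that $\tau\in G_p$ preserves distances to $p$. Your write-up is in fact slightly more explicit than the paper's (which phrases the sign argument through the exponential map at $p$), but the ideas coincide.
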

\begin{proof}
Since  $H^1_{0,\widetilde H}(\Omega)$ is a closed subspace of $ H^1_{0,H}(\Omega)$ it follows that  the embedding
\[
H^1_{0,\widetilde H}(\Omega) \hookrightarrow L^p (\Omega),\ 2 < p < 2^*,
\]
is the restriction of a compact operator on a closed subspace and so it
is compact as well. Finally, pick $\tau \notin H$. If $u\in H^1_{0,\widetilde H} (\Omega)$, then
\[
\tau u(\exp_p (v))=-u(\exp_p (\tau (v))
\]
and so $u$ is not radial unless it is zero.
\end{proof}
The aim of this section is to generalize the idea due to Bartch and Willem \cite{BW} in order to
show the existence of group actions whose subspace
of fixed points have nonradial functions.
\subsection{Symmetric Spaces of noncompact type of rank one}
Let $(M,g)$ be a symmetric space of noncompact type of rank one. Let $p\in M$ and let $G_p =\{g\in \mathrm{Iso}(M,g): g(p)=p\}$. It is a well known $G_p$ is a compact group acting  transitively on the unit sphere of
$(T_p M, g(p))$, see \cite{helgason}. Hence $G_p$ satisfies the assumption on Theorem \ref{toritto-compatto-hadamard}. We claim that   $H^1_{0,G_p}(M)$ coincides with the set of radial functions of class $H^1_{0} (M)$ that we denote by $H^1_{0,\mathrm{rad},p}(M)$.
Indeed, let  $u:M \longrightarrow \mathbb R$ be a $G_p$-invariant functions. It is enough to prove that $\widetilde u=u\circ \exp_p^{-1}$ is a radial function.

Let $v\in T_p M$ and let $w\in T_p M$ be such that $\parallel v \parallel=\parallel w  \parallel$. Then there exists $g\in G_p$ such that $g^{-1}(v)=w$. Hence
\[
\widetilde u(w)=u(\exp_p ( w))=u\exp_{g^{-1} p}(g^{-1} v) )=g u (\exp_p (v))=\widetilde u (v).
\]
Applying Theorem \ref{toritto-compatto-hadamard} we have proved the following result.
\begin{theorem}
Let $(M,g)$ be a symmetric space of noncompact type of rank one and let $p\in M$. Then, the following embedding
\[
H^1_{0,\mathrm{rad},p} (M) \hookrightarrow L^p (\Omega),\ 2 < p < 2^*,
\]
is compact.
\end{theorem}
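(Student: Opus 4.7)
The plan is to realize this theorem as a direct application of Theorem \ref{toritto-compatto-hadamard} to the compact isotropy group $G_p$, once one identifies $G_p$-invariant functions with radial functions based at $p$. Since a symmetric space of noncompact type is in particular an Hadamard manifold (simply connected, complete, nonpositive curvature), the framework of the previous section applies, and I may take $\Omega=M$, which is unbounded, $G_p$-invariant and contains $p$.

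First I would verify the hypotheses of Theorem \ref{toritto-compatto-hadamard}. By construction $G_p\subset \mathrm{Iso}(M,g)$ is compact, and for a rank-one symmetric space of noncompact type the isotropy representation of $G_p$ on $T_pM$ is transitive on each unit sphere (see \cite{helgason}); in particular $\dim G_p\geq 1$ as soon as $\dim M\geq 2$. To verify $M^{G_p}=\{p\}$, suppose $q\in M$ is $G_p$-fixed with $q\neq p$. Because $M$ is Hadamard, $\exp_p$ is a diffeomorphism, so $v=\exp_p^{-1}(q)\in T_pM$ is a nonzero vector, and $G_p$-equivariance of $\exp_p$ forces $\mathrm{d}g_p(v)=v$ for every $g\in G_p$. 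This contradicts transitivity of the isotropy action on the sphere of radius $\parallel v\parallel$ in $T_pM$. Hence $M^{G_p}=\{p\}$.

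Applying Theorem \ref{toritto-compatto-hadamard} with $\Omega=M$ yields that $H^1_{0,G_p}(M)\hookrightarrow L^p(M)$ is compact for $2<p<2^*$. It then remains to identify $H^1_{0,G_p}(M)$ with $H^1_{0,\mathrm{rad},p}(M)$, which is precisely the computation carried out in the paragraph preceding the theorem statement: if $u$ is $G_p$-invariant and $v,w\in T_pM$ satisfy $\parallel v\parallel=\parallel w\parallel$, transitivity gives $g\in G_p$ with $\mathrm{d}g_p(v)=w$, and then by $G_p$-equivariance of $\exp_p$ one has $u(\exp_p(w))=u(g\exp_p(v))=u(\exp_p(v))$, so $\widetilde u=u\circ \exp_p^{-1}$ depends only on $\parallel\cdot\parallel$; the converse inclusion is immediate since isometries in $G_p$ preserve $d(p,\cdot)$.

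The main obstacle here is essentially nonexistent: all the analytic work sits inside Theorem \ref{toritto-compatto-hadamard}, and all the symmetric-space input is the classical rank-one fact that $G_p$ acts transitively on unit spheres of $T_pM$. Once that is cited, the proof is a verification that $G_p$ meets the hypotheses of the Hadamard compactness theorem together with a short equivariance calculation identifying invariant with radial.
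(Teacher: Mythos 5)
Your proposal is correct and follows essentially the same route as the paper: cite the classical rank-one fact that the compact isotropy group $G_p$ acts transitively on unit spheres of $T_pM$, deduce that $G_p$ satisfies the hypotheses of Theorem \ref{toritto-compatto-hadamard} with $\Omega=M$, and identify $H^1_{0,G_p}(M)$ with $H^1_{0,\mathrm{rad},p}(M)$ via equivariance of $\exp_p$. Your explicit verification that $M^{G_p}=\{p\}$ is in fact slightly more careful than the paper's, which leaves that step implicit.
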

Let $\mathbb H^n$ be  the hyperbolic space. $\mathbb H^n$ is a symmetric space of noncompact type of rank one \cite{helgason}. One model is given as follows, see \cite{docarmo}.

Let $\mathbb R^{n+1}=\mathbb R^n \oplus \mathbb R$ endowed by the Minkoswky inner product defined by the quadratic form
\[
q(x,t)=\parallel x \parallel^2 - t^2,
\]
where $\parallel \cdot \parallel $ is the norm with respect to the canonical scalar product on $\mathbb R^n$. Let
\[
\mathbb H^n=\{(x,t)\in \mathbb R^{n+1}: q(x,t)=-1\}.
\]
It is well known that the Minkowsky inner product induces on $\mathbb H^n$ a complete Riemannian metric $\langle \cdot, \cdot \rangle$ with constant sectional curvature $-1$. Moreover, $\mathrm{Iso}(\mathbb H^n, \langle \cdot , \cdot \rangle)=\mathrm{O}(1,n)=\{A\in \mathrm{Gl}(n+1,\mathbb R):\, q(A(x,t))=q(x,t)\}$ and $\mathrm{O}(1,n)_{e_{n+1}}=\mathrm{O}(n)$, where $e_{n+1}=(0,\ldots,0,1)^T$. Moreover the slice representation, i.e., the $\mathrm{O}(n)$ action on $T_{e_{n+1}} \mathbb H^n =\mathbb R^n$ is the standard $\mathrm{O}(n)$ action on $\mathbb R^n$.

Assume that $n\geq 4$. Let $H=\mathrm{SO}(2) \times \mathrm{SO}(n-2) \subset \tilde H=\mathrm{O}(2) \times \mathrm{SO}(n-2)\subset \mathrm{O}(n)$. Then both $H$ and $\tilde H$ satisfy the condition of Theorem \ref{toritto-compatto-hadamard} and $\tilde H /H \cong \mathbb Z_2$. Therefore, the action
\[
\widetilde H \times H^1 (\mathbb H^n) \longrightarrow H^1(\mathbb H^n ), \qquad ((A,B),u) \mapsto  \mathrm{det}(A) u((A^{-1} \cdot,B^{-1} \cdot)) .
\]
is an isometric action, the embedding
\[
H^1_{0,\widetilde H}(\mathbb H^n) \hookrightarrow L^p (\mathbb H^n),\ 2 < p < 2^*,
\]
is compact and the set $H^1_{0,\widetilde H}(\mathbb H^n)$ does not contain radial functions unless $u=0$.

Let $M=\mathrm{SU}(1,n)/\mathrm{S}(\mathrm{U}(1) \times \mathrm{U}(n))$ be the symmetric space of noncompact of type which is the dual of the complex projective space. It is a symmetric space of noncompact type of rank one \cite{helgason}. The isotropy representation is given by the natural action of $\mathrm{U}(n)$ on $\mathbb C^n$.

Let $T\subset \mathrm{SU}(n)$  be a maximal torus.
It is easy to check that $(\mathbb C^n)^{T}=\{0\}$ and so it satisfies the condition of Theorem \ref{toritto-compatto-hadamard}.
It is also well-known that the Weyl group of $\mathrm{SU}(n)$ is isomorphic to the group of permutations of $n-1$ elements, see \cite{mimura-toda}.
Therefore, there exists $\tau \in \mathrm{SU}(n) \setminus\{T\}$ such that $\tau^2=\mathrm{Id}$ and $\tau T\tau=T$. Denote by $\widetilde H$ be the closed group generated by $T$ and $\tau$ and by $\rho: \widetilde H \longrightarrow \widetilde H /T \cong \mathbb Z_2$ the  natural projection. Then the action
\[
\widetilde H \times H^1 (M) \longrightarrow H^1(M), \qquad (g) \mapsto  \rho(g) u(g^{-1} \cdot) .
\]
is an isometric action, the embedding
\[
H^1_{0,\widetilde H}(M) \hookrightarrow L^p (M),\ 2 < p < 2^*,
\]
is compact and the set $H^1_{0,\widetilde H}(M)$ does not contain radial functions unless $u=0$.

Let $M=\mathrm{Sp}(1,n)/\mathrm{Sp}(1)\times \mathrm{Sp}(n))$ be the symmetric space of noncompact type which is the dual of the Quaternionic projective space. It is a symmetric space of noncompact type of rank one, see \cite{helgason}. The isotropy representation is given by the natural action of $\mathrm{Sp}(1) \cdot \mathrm{Sp}(n)$ on $\mathbb H^n$.

Let $T\subset \mathrm{Sp}(n)$  be a maximal torus.
It is easy to check that $(\mathbb H^n)^{T}=\{0\}$ and so it satisfies the condition of Theorem \ref{toritto-compatto-hadamard}.
It is also well-known that the Weyl group of $\mathrm{Sp}(n)$ is isomorphic to $\mathrm{S}_n \ltimes \mathbb Z_2$, see \cite{mimura-toda}.
Therefore, there exists $\tau \in \mathrm{Sp}(n) \setminus\{T\}$ such that $\tau^2=\mathrm{Id}$ and $\tau T\tau=T$. Denote by $\widetilde H$ be the closed group generated by $T$ and $\tau$ and by $\rho: \widetilde H \longrightarrow \widetilde H /T \cong \mathbb Z_2$ the  natural projection. Then the action
\[
\widetilde H \times H^1 (M) \longrightarrow H^1(M), \qquad (g) \mapsto  \rho(g) u(g^{-1} \cdot) .
\]
is an isometric action, the embedding
\[
H^1_{0,\widetilde H}(M) \hookrightarrow L^p (M),\ 2 < p < 2^*,
\]
is compact and the set $H^1_{0,\widetilde H}(M)$ does not contain radial functions unless $u=0$.
\subsection{The space $M=\mathrm{SL}(n,\mathbb R)/\mathrm{SO}(n)$}
The manifold $M=\mathrm{SL}(n,\mathbb R)/\mathrm{SO}(n)$ is a symmetric space of noncompact type. For a sake of completeness we briefly recall some well known facts.
The Cartan decomposition of the Lie algebra of $\mathrm{SL}(n,\mathbb R)$ is given by
\[
\mathfrak{sl}(n,\mathbb R)=\mathfrak{so} (n) \oplus \mathrm{Sym}_0 (n),
\]
where $\mathrm{Sym}_0 (n)=\{A\in \mathfrak{gl}(n,\mathbb R): A=A^T,\, \mathrm{Tr}(A)=0\}$ and $\mathfrak{so}(n)$ is the Lie algebra of $\mathrm{SO}(n)$. Therefore we may identify $T_{[\mathrm{SO}(n)]} M$ with $\mathrm{Sym}_0(n)$ and the isotropy representation of $\mathrm{SO}(n)$ is given by
\[
\sigma:\mathrm{SO}(n) \longrightarrow \mathrm{SO}(T_p M), \qquad A \mapsto \mathrm{Ad}(A),
\]
where $\mathrm{Ad}(A)(X)=AXA^T$, i.e., the adjoint action. This action is isometric with respect to scalar product
$\langle X,Y \rangle:=\mathrm{Tr}(XY)=\mathrm{Tr}(XY^T)$ defined on $\mathrm{Sym}_0$. This allow us to define a Riemannian metric on $M$,
which coincides with $\langle \cdot,\cdot \rangle$ at $[\mathrm{SO}(n)]$ by requesting that the left translation
$L_g :M \longrightarrow M$, $h\mathrm{SO}(n)] \mapsto gh\mathrm{SO}(n)]$ is an isometry.
We also denote by $\langle \cdot,\cdot \rangle$ this Riemannian metric and   $(M,\langle \cdot,\cdot \rangle)$ is a symmetric space of noncompact type of nonpositive sectional curvature \cite{helgason}.
Let $\mathrm{SU}(n) \subset \mathrm{SO}(2n)$ being
\[
A+iB \mapsto \left[\begin{array}{cc} A & B \\ -B & A \end{array}\right].
\]
This means that
\[
\mathrm{SU}(n)=\{C\in \mathrm{SO}(2n):\, CJC^T=J\},
\]
where
$
J=\left[\begin{array}{cc} 0 & -\mathrm{Id}_n \\ \mathrm{Id}_n & 0 \end{array}\right].
$
We claim that $\mathrm{Sym}_0 (2n)^{\mathrm{SU}(n)}=\{0\}$. Indeed, let $A\in \mathrm{Sym}_0 (2n)^{\mathrm{SU}(n)}$. Since $A$ is symmetric it can be diagonalize. Since $A$ is a $\mathrm{SU}(n)$ fixed point, it follows that any eigenspace is preserved by $\mathrm{SU}(n)$. Since $\mathrm{SU}(n)$ acts irreducibly on $\mathbb R^{2n}$ it follows that $A$ must be diagonal with trace $0$ and so  $A=\{0\}$.

Let
$\tau=\left[\begin{array}{cc} \mathrm{Id}_n & 0 \\ 0 & -\mathrm{Id}_n \end{array}\right]$. It is easy to see that $\tau^2=\mathrm{Id}$, $\tau \notin \mathrm{SU}(n)$ and $\tau\mathrm{SU}(2n) \tau=\mathrm{SU}(2n)$.
Identify $\tau$ with $\sigma(\tau)$, we define  $\widetilde \tau=\exp \circ \tau \exp^{-1}$.
Therefore $\widetilde \tau$ is an isometry of $M$, due to the fact that  $\tau$ lies in the image of the isotropy representation, satisfying  $\widetilde \tau^2=\mathrm{Id}_M$. Moreover, if we consider $H$ acting on
$M$, we have  and $\widetilde \tau H \widetilde \tau=H$. Indeed, denoting by $p=[\mathrm{SO}(2n)]$,  for any $h\in H$, we have $\widetilde \tau h \widetilde \tau p=p$ and so $\widetilde \tau h \widetilde \tau $ is completely determine by its differential. Since $\tau \mathrm d h \tau \in H$ it follows that $\widetilde \tau h \widetilde \tau \in H$ as well.

By Lemma \ref{toritto-tecnico} the subgroup $\widetilde H$ generated by $H$ and $\tau$ is closed and there exists a surjective homomorphism $\rho:\widetilde H \longrightarrow \mathbb Z_2$ such that $\rho(H)=1$.  By Lemma  \ref{toritto-non-radiali}  the action
\[
\widetilde H \times H^1 (\mathrm{SL}(2n,\mathbb R) / \mathrm{SO}(2n)) \longrightarrow H^1(\mathrm{SL}(n,\mathbb R) / \mathrm{SO}(2n)), \qquad (g,u)\mapsto \rho(g) u(g^{-1} \cdot)
\]
is isometric,
the embedding
\[
H^1_{0,\widetilde H}(\mathrm{SL}(2n,\mathbb R) / \mathrm{SO}(2n)) \hookrightarrow L^p (\mathrm{SL}(2n,\mathbb R) / \mathrm{SO}(n) ),\ 2 < p < 2^*,
\]
is compact and finally $H^1_{0,\widetilde H}(\mathrm{SL}(2n,\mathbb R) / \mathrm{SO}(2n) )$ does not contains radial functions unless $u=0$.


\begin{thebibliography}{99}

\bibitem{renatone-marcox}
M.M. Alexandrino and R. Bettiol,
{\sl Lie Groups and Geometric Aspects of Isometric Actions, }
Springer International Publishing, Switzerland, 2015.


\bibitem{aubin}
T. Aubin,
{\sl Some nonlinear problems in Riemannian geometry. }
Springer Monographs in Mathematics. Springer-Verlag, Berlin, 1998. xviii+395 pp.


\bibitem{BW}
T. Bartsch and M. Willem,
{\sl Infinitely many nonracial solutions of a euclidean scalar field equation, }
Journal Funct. Anal. {\bf 117} (1993),  447--460.

\bibitem{P} P. d'Avenia,
{\sl Non-radially symmetric solutions of nonlinear Schr\"odinger equation coupled with Maxwell equations, }
Adv. Nonlinear Stud. {\bf 2} (2002), 117--192.



\bibitem{DSS}
P. d'Avenia, G. Siciliano and M. Squassina,
{\sl On fractional Choquard equations, }
Mathematical Models and Methods in Applied Sciences {\bf 25}, No. 8 (2015) 144--1476.

\bibitem{docarmo}
M.P. do Carmo,
{\sl Riemannian geometry, }
Translated from the second Portuguese edition by Francis Flaherty. Mathematics: Theory $\&$ Applications. Birkh\"auser Boston. Inc., Boston, MA, 1992 xiv+300 pp.

\bibitem{eberlein} P. Eberlein,
 {\sl Geometry of nonpositively curved manifolds, }
 Chicago Lectures in Mathematics.

\bibitem{helgason}
S. Helgason,
{\sl Differential Geometry, Lie Groups and Symmetric Spaces, }
Corrected reprint of the 1978 original. Graduate Studies in Mathematics, 34. American Mathematical Society, Providence, RI, 2001.


\bibitem{L} P.L. Lions,
{\sl Sym\'etrie et compacité dans les espaces de Sobolev,}
J. Funct. Anal. {\bf 49} (1982), 315--334.

\bibitem{kobayashi-transformation}
S. Kobayashi,
{\sl Transformation groups in differential geometry},
Reprint of the 1972 edition. Classics in Mathematics. Springer-Verlag, Berlin, 1995. viii+182 pp.

\bibitem{mimura-toda}
M. Mimura and H. Toda,
{\sl Topology of Lie groups. I, II},
Translated from the 1978 Japanese edition by the authors. Translations of Mathematical Monographs, 91. American Mathematical Society, Providence, RI, 1991. iv+451 pp.

\bibitem{PA}
R. Palais,
{\sl Principle of symmetric criticality,}
Commun. Math. Phys. {\bf 69} (1979) 19--30.

\bibitem{willem}
M. Willem,
Minimax Theorem.
Progress in Nonlinear Differential Equations and their Applications, 24.
 Birkh\"auser Boston, Inc., Boston, MA, 1996. x+162 pp.

\end{thebibliography}
\end{document}